\newtheorem{proposition}{Proposition}[section]
\newtheorem{lemma}[proposition]{Lemma}
\newtheorem{corollary}[proposition]{Corollary}
\newtheorem{theorem}[proposition]{Theorem}
\newtheorem{definition}[proposition]{Definition}
\begin{document}
	
	\title{On the Gowers trick for classical simple groups}
	
	\author{Francesco Fumagalli} 
	\address{Dipartimento di Matematica  e
      Informatica 'Ulisse Dini', 
		Viale Morgagni 67/A, 50134 Firenze, 
      Italy}
	\email{francesco.fumagalli@unifi.it}
	
	\author{Attila Mar\'oti}
	\address{Hun-Ren Alfr\'ed R\'enyi Institute of 
 Mathematics, Re\'altanoda utca 13-15, H-1053, 
 Budapest, Hungary}
	\email{maroti.attila@renyi.hu}

\keywords{Gowers trick, simple group, character bounds}
\subjclass[2020]{20D06, 20D40, 20G05}
\thanks{The first author is partially supported
by the Italian INdAM-GNSAGA. 
The second author thanks the hospitality of the Department of Mathematics at the University of Florence. He was also supported by the National Research, 
Development and Innovation Office (NKFIH) Grant 
No.~K138596, No.~K132951 and Grant No.~K138828.}
\maketitle

\begin{abstract} 
If $A$, $B$, $C$ are subsets in  a  finite simple group of Lie type $G$ at least two of which are normal with  
$|A||B||C|$ relatively large, then we establish a stronger conclusion than $ABC = G$. This is related to a theorem of Gowers and is a generalization of a theorem of Larsen, Shalev, Tiep and the second author and Pyber.  
\end{abstract}

\section{Introduction}

Let $A$, $B$, $C$ be subsets of a finite group 
$G$. Let $\mathrm{Prob}(A,B,C)$ be the 
probability that if $a$ and $b$ are uniformly 
and randomly chosen elements from $A$ and $B$ 
respectively, then $ab \in C$. Recall that a subset of $G$ is normal if it is invariant under conjugation by every element of $G$. 

\begin{theorem}\label{main}	
%\textcolor{red}{(TO BE ADJUSTED!)}
There exists a universal constant $\delta > 0$ 
such that whenever $G$ is a finite simple group of Lie type and whenever $A$, $B$, $C$ are subsets in 
$G$ such that
\begin{enumerate}
\item at least two of the three subsets $A,B,C$ are normal in $G$ and 
\item $|A||B||C| > |G|^{3-\delta}/\eta^2$  for some given $\eta$ with  $0<\eta<1/4$,
\end{enumerate}
then 
$$(1 - \eta) \frac{|C|}{|G|} < \mathrm{Prob}
(A,B,C) < (1 + \eta) \frac{|C|}{|G|}$$ and, for 
any $g \in G$, the number $N$ of triples 
$(a,b,c) \in A \times B \times C$ such that $abc 
= g$ satisfies $$(1 - \eta) 
\frac{|A||B||C|}{|G|} < N < (1 + \eta) 
\frac{|A||B||C|}{|G|}.$$ 	
\end{theorem}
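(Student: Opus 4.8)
\emph{Proof idea.}
The plan is to run the Fourier-analytic argument behind the classical Gowers trick, but to feed into it the recent strong character bounds for groups of Lie type in place of the lower bound on the smallest nontrivial character degree, which is far too weak once the Lie rank is large. First I would note that the statement about $\mathrm{Prob}(A,B,C)$ is the special case $g=1$ of the statement about the number $N$ of triples, applied to the triple $(A,B,C^{-1})$: indeed $|A||B|\cdot\mathrm{Prob}(A,B,C)$ counts the triples $(a,b,c)\in A\times B\times C^{-1}$ with $abc=1$, while $C^{-1}$ is again normal because inversion commutes with conjugation, so hypotheses (1) and (2) are preserved. Hence it suffices to prove the two inequalities for $N$.

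Write $1_A,1_B,1_C$ for the indicator functions and use that the two normal ones among them are class functions. A direct computation with the convolution product on the group algebra — carried out slightly differently according to which of the three sets is not assumed normal, but in each case producing the same shape — gives
\[
N \;=\; \frac{|A||B||C|}{|G|}\;+\;\frac{1}{|G|}\sum_{1\neq\chi\in\mathrm{Irr}(G)}\frac{1}{\chi(1)}\Bigl(\sum_{a\in A}\overline{\chi(a)}\Bigr)\Bigl(\sum_{b\in B}\chi(b)\Bigr)\Bigl(\sum_{c\in C}\chi(c^{-1}g)\Bigr),
\]
so the task is to bound the error sum by $\eta\,|A||B||C|/|G|$. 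For whichever set is not required to be normal — say it is $C$, the set occurring in the last factor — Cauchy--Schwarz together with the orthogonality relation $\sum_{x\in G}|\chi(x)|^2=|G|$ gives $\bigl|\sum_{c\in C}\chi(c^{-1}g)\bigr|\le(|C||G|)^{1/2}$ for every $\chi$; this step costs a factor of about $|G|^{1/2}$ and is the reason the threshold in (2) is $|G|^{3-\delta}$ rather than anything smaller. It then remains to prove that $S:=\sum_{1\neq\chi}\chi(1)^{-1}|\Sigma_A(\chi)||\Sigma_B(\chi)|$, where $\Sigma_A(\chi)=\sum_{a\in A}\chi(a)$ and likewise for $B$, is at most about $\eta\,|A||B|(|C|/|G|)^{1/2}$. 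Using only $\chi(1)\ge D(G)$ (the smallest nontrivial degree) and the Parseval identities $\sum_\chi|\Sigma_A(\chi)|^2=|G||A|$ and $\sum_\chi|\Sigma_B(\chi)|^2=|G||B|$ (valid because $A,B$ are unions of full conjugacy classes) gives $S\le |G|(|A||B|)^{1/2}/D(G)$, which only yields a $\delta$ comparable to the exponent of $D(G)$ in $|G|$; this tends to $0$ as the rank grows, so one must gain a fixed positive power of $|G|$ over it.

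That gain is the heart of the proof, and the main obstacle is to obtain it uniformly in the Lie rank and the field size. Here I would split $\mathrm{Irr}(G)$ according to character degree — equivalently, character level in the sense of Guralnick--Larsen--Tiep — and on each piece combine the power-saving bounds $|\chi(g)|\le\chi(1)^{1-\alpha}$ of Larsen--Tiep and Larsen--Shalev--Tiep (which control $|\Sigma_A(\chi)|$ by roughly $|A|\chi(1)^{1-\alpha}$, interpolated with the individual Parseval bound $|\Sigma_A(\chi)|\le(|G||A|)^{1/2}$) with the fact, due to Liebeck--Shalev, that $\sum_{\chi\in\mathrm{Irr}(G)}\chi(1)^{-s}$ is bounded and tends to $1$ once $s$ is bounded away from $1$. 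The delicate point is that the characters of small degree obey only weak power saving but are few, whereas the many characters of large degree are tamed by the convergence of that Witten zeta value, and the two ranges must be balanced so that the resulting $\delta$ depends on nothing; this is precisely where the restriction to classical groups is used, through the explicit description of their low-level characters. (The contribution of the identity element, where the character bounds degrade, is a single term and is absorbed.) Once $S$, and hence the whole error sum, is bounded by $\eta\,|A||B||C|/|G|$ under hypothesis (2), both inequalities for $N$ — and with them the estimate for $\mathrm{Prob}(A,B,C)$ — follow at once.
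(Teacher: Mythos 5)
Your reduction of the probability estimate to the triple-counting estimate, and the character expansion of $N$ using that $1_A$ and $1_B$ are class functions, are both fine, and your treatment of the two normal factors (Guralnick--Larsen--Tiep power-saving bounds for elements with small centralizer, combined with the convergence of the Witten zeta function $\zeta^G(s)$ for $s>2/3$) is essentially what the paper does in its core case of three conjugacy classes. One detail you would still need there: the bound $|\Sigma_A(\chi)|\le|A|\chi(1)^{1-\alpha}$ fails on the elements of $A$ lying in small conjugacy classes (large centralizers), to which the character bounds do not apply; the paper first discards the union of those classes from $A$ and $B$, which is negligible because $k(G)\le|G|^{c/n}$. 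That part is repairable.

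The genuine gap is the Cauchy--Schwarz step on the non-normal set $C$. That step costs a factor $(|G|/|C|)^{1/2}$, which under hypothesis (2) can be as large as roughly $\eta|G|^{\delta/2}$, so you are left needing
\[
\sum_{1\neq\chi\in\mathrm{Irr}(G)}\frac{|\Sigma_A(\chi)|\,|\Sigma_B(\chi)|}{\chi(1)}\;\le\;\eta\,|A||B|\Bigl(\frac{|C|}{|G|}\Bigr)^{1/2},
\]
that is, a saving of a \emph{fixed positive power of $|G|$} over $|A||B|$. No such saving is available uniformly in the rank: already the contribution of a single nontrivial character $\chi$ of smallest degree, with the best conceivable exponent $|\chi(a)|\le\chi(1)^{1/10}$ applied to both $A$ and $B$ (and this is essentially sharp when $A$ and $B$ are single large classes), is of order $|A||B|\chi(1)^{-4/5}$; for $\mathrm{Cl}(n,q)$ the smallest nontrivial degree is only about $|G|^{O(1/n)}$, so this one term is $|A||B|\,|G|^{-O(1/n)}$, which exceeds $|A||B|\,|G|^{-\delta/2}$ for every universal $\delta>0$ once $n$ is large. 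The same obstruction persists if you instead apply Cauchy--Schwarz over the characters rather than over $C$. This is precisely where the paper does something different: since $A$ and $B$ are normal, $N(A,B,\{c\})$ depends only on the conjugacy class of $c$, so $\mathrm{Prob}(A,B,C)=\frac{1}{|A||B|}\sum_i |C\cap K_i|\,N(A,B,K_i)/|K_i|$ is a weighted average of the quantities $\mathrm{Prob}(A,B,K_i)$ over the conjugacy classes $K_i$ of $G$; the large classes are handled by the already-established case of three normal sets, and the union of the small classes has size at most $k(G)\epsilon|G|\le|G|^{1+c/n-\delta}$ and contributes negligibly. No Cauchy--Schwarz, and hence no $(|G|/|C|)^{1/2}$ loss, is ever incurred on $C$. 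Without this class-averaging device (or a substitute for it) your argument cannot close for unbounded rank; for bounded rank it reduces to Gowers' original theorem, which the paper also invokes for that case.
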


Larsen, Shalev, Tiep \cite[Theorem 7.4]{LST} and 
the second author and Pyber \cite[Theorem 
1.3]{MP} proved that there exists a universal constant $\delta > 0$ 
such that whenever $A$, $B$, $C$ are normal 
subsets in a finite simple group of Lie type 
$G$, each of size at least $|G|^{1-\delta}$, 
then $ABC = G$. Theorem \ref{main} is an 
improvement of this result.
%It is also a generalization of a theorem of Lifshitz and Marmor \cite[Theorem 2.5]{LM}. 
Theorem \ref{main} is also related to a theorem of 
Gowers. See the next section. 

\section{A theorem of Gowers and the Gowers trick}

Let $G$ be a finite group and let $A$, $B$, $C$ be subsets of 
$G$. As in the Introduction, let $\mathrm{Prob}(A,B,C)$ be the 
probability that if $a$ and $b$ are uniformly 
and randomly chosen elements from $A$ and $B$ 
respectively, then $ab \in C$. Let $k$ be the minimal degree of a non-trivial complex irreducible character of $G$. Gowers proved the following stronger form of 
\cite[Theorem 3.3]{Gowers}, which is implicit in 
its proof and which may be considered as the 
main result of \cite{Gowers}. 

\begin{theorem}[Gowers]
\label{Gowers}	
If $\eta > 0$ is 
such that $|A||B||C| > |G|^{3}/\eta^{2}k$, then 
$$(1 - \eta) \frac{|C|}{|G|} < \mathrm{Prob}
(A,B,C) < (1 + \eta) \frac{|C|}{|G|}.$$ 
\end{theorem}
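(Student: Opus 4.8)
The plan is to follow Gowers' original argument, isolating the single place where the group really enters—a lower bound on the degrees of nontrivial characters—and handling everything else by soft harmonic analysis on $G$.

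First I would linearise. Writing $\mathbf{1}_S$ for the indicator function of $S\subseteq G$,
\[
\mathrm{Prob}(A,B,C)=\frac{1}{|A||B|}\sum_{x,y\in G}\mathbf{1}_A(x)\,\mathbf{1}_B(y)\,\mathbf{1}_C(xy).
\]
Put $f=\mathbf{1}_A-|A|/|G|$ and $g=\mathbf{1}_B-|B|/|G|$, both of mean zero, and substitute $\mathbf{1}_A=|A|/|G|+f$, $\mathbf{1}_B=|B|/|G|+g$. The constant$\times$constant term gives the main term $|C|/|G|$, the two mixed terms that retain a constant factor vanish (e.g. $\sum_{x,y}\tfrac{|A|}{|G|}g(y)\mathbf{1}_C(xy)=\tfrac{|A|}{|G|}\big(\sum_y g(y)\big)|C|=0$, since $x\mapsto xy$ is a bijection of $G$ for each fixed $y$), and what remains is
\[
\mathrm{Prob}(A,B,C)=\frac{|C|}{|G|}+\frac{1}{|A||B|}\sum_{z\in G}(f*g)(z)\,\mathbf{1}_C(z),
\]
with $(f*g)(z)=\sum_{xy=z}f(x)g(y)$. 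Thus it suffices to bound $\big|\langle f*g,\mathbf{1}_C\rangle\big|$.

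The second step is the key estimate, \emph{Gowers' mixing (quasirandomness) inequality}: if $f$ has mean zero then $\|f*g\|_2\le\sqrt{|G|/k}\;\|f\|_2\,\|g\|_2$. I would prove this by non-abelian Plancherel: with $\widehat f(\rho)=\sum_x f(x)\rho(x)$ one has $\|f\|_2^2=\tfrac1{|G|}\sum_\rho d_\rho\|\widehat f(\rho)\|_{\mathrm{HS}}^2$ and $\widehat{f*g}(\rho)=\widehat f(\rho)\widehat g(\rho)$. Mean zero forces $\widehat f$ to vanish on the trivial representation, so only $\rho$ with $d_\rho\ge k$ contribute; combining $\|\widehat f(\rho)\|_{\mathrm{op}}^2\le\|\widehat f(\rho)\|_{\mathrm{HS}}^2\le(|G|/d_\rho)\|f\|_2^2\le(|G|/k)\|f\|_2^2$ with the submultiplicativity $\|\widehat f(\rho)\widehat g(\rho)\|_{\mathrm{HS}}\le\|\widehat f(\rho)\|_{\mathrm{op}}\|\widehat g(\rho)\|_{\mathrm{HS}}$ and summing over $\rho$ yields the claim. (The indicator functions are real, so conjugates cause no trouble; otherwise one simply carries bars along.)

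Finally I would assemble. Cauchy–Schwarz and the mixing inequality give $\big|\langle f*g,\mathbf{1}_C\rangle\big|\le\|f*g\|_2\|\mathbf{1}_C\|_2\le\sqrt{|G|/k}\,\|f\|_2\|g\|_2\sqrt{|C|}$; since $\|f\|_2^2=|A|(1-|A|/|G|)\le|A|$, likewise $\|g\|_2^2\le|B|$, and $\|\mathbf{1}_C\|_2^2=|C|$, we obtain
\[
\Big|\mathrm{Prob}(A,B,C)-\frac{|C|}{|G|}\Big|\le\frac{1}{|A||B|}\sqrt{\frac{|G|\,|A||B||C|}{k}}=\sqrt{\frac{|G|\,|C|}{k\,|A||B|}}.
\]
The hypothesis $|A||B||C|>|G|^{3}/(\eta^{2}k)$ is precisely the statement that the right-hand side is $<\eta|C|/|G|$, which is the asserted two-sided bound. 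Past the mixing inequality there is no obstacle: the whole difficulty is concentrated in that one spectral estimate, and through it in the black-box fact—encoded in $k$—that $G$ has no small nontrivial representation. (For Theorem~\ref{main} one must later feed in character bounds strong enough that the conclusion survives replacing $k$ by $\eta^{2}$, but that goes beyond the present statement.)
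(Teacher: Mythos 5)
Your argument is correct and is essentially Gowers' own proof: the paper does not prove Theorem \ref{Gowers} at all but quotes it from \cite{Gowers}, and the route you take --- splitting off the mean to reduce to bounding $\langle f*g,\mathbf{1}_C\rangle$, the Plancherel-based mixing inequality $\|f*g\|_2\le\sqrt{|G|/k}\,\|f\|_2\,\|g\|_2$ using that $\widehat f$ vanishes on the trivial representation and $d_\rho\ge k$ elsewhere, then Cauchy--Schwarz --- is exactly the standard argument behind that citation. All the steps check out, and the hypothesis $|A||B||C|>|G|^3/\eta^2k$ is precisely what makes the resulting error term $\sqrt{|G||C|/(k|A||B|)}$ smaller than $\eta|C|/|G|$.
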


The Gowers trick was obtained by Nikolov and Pyber \cite[Corollary 1]{NP}. We state it in the 
following form. 

\begin{theorem}
\label{Gowerstrick}	
If $\eta > 0$ is such that 
$|A||B||C| > |G|^{3}/\eta^{2}k$, then for any $g 
\in G$, the number $N$ of triples $(a,b,c) \in A 
\times B \times C$ such that $abc = g$ satisfies 
$$(1 - \eta) \frac{|A||B||C|}{|G|} < N < (1 + 
\eta) \frac{|A||B||C|}{|G|}.$$
\end{theorem}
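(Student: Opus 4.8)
The plan is to reduce the statement directly to Gowers' theorem (Theorem~\ref{Gowers}) by a change of variables, following Nikolov and Pyber. Fix $g \in G$. For a triple $(a,b,c) \in A \times B \times C$, the relation $abc = g$ is equivalent to $ab = g c^{-1}$. Thus, writing $D := g C^{-1} = \{ g c^{-1} : c \in C \}$, the map $c \mapsto g c^{-1}$ is a bijection from $C$ onto $D$, and in particular $|D| = |C|$. Under this bijection, the triples $(a,b,c) \in A \times B \times C$ with $abc = g$ correspond exactly to the pairs $(a,b) \in A \times B$ with $ab \in D$. Hence
$$N = \#\{ (a,b) \in A \times B : ab \in D \} = |A|\,|B|\cdot \mathrm{Prob}(A,B,D).$$

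Next I would check that the hypothesis of Theorem~\ref{Gowers} is inherited by the triple $(A, B, D)$. Since $|D| = |C|$, we have $|A|\,|B|\,|D| = |A|\,|B|\,|C| > |G|^{3}/\eta^{2}k$, which is precisely the assumption needed to apply Gowers' theorem to $A$, $B$, $D$. This yields
$$(1 - \eta)\frac{|D|}{|G|} < \mathrm{Prob}(A,B,D) < (1 + \eta)\frac{|D|}{|G|}.$$

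Finally, multiplying the displayed inequalities by $|A|\,|B|$, and then substituting $N = |A|\,|B|\cdot\mathrm{Prob}(A,B,D)$ in the middle and $|D| = |C|$ on the two outer terms, gives
$$(1 - \eta)\frac{|A|\,|B|\,|C|}{|G|} < N < (1 + \eta)\frac{|A|\,|B|\,|C|}{|G|},$$
which is exactly the claimed bound.

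As for the main obstacle: there is essentially none beyond the bookkeeping above, since all of the analytic content is packaged in Theorem~\ref{Gowers}. The only point requiring a moment's care is that passing from the triple-counting formulation to a probability statement needs a substitution ($c \mapsto g c^{-1}$) that simultaneously converts the equation $abc = g$ into the membership condition $ab \in D$ and preserves the cardinality of the third set, so that the quantitative hypothesis $|A|\,|B|\,|C| > |G|^{3}/\eta^{2}k$ transfers verbatim to $(A,B,D)$; both facts hold because left translation by $g$ and inversion are bijections of $G$.
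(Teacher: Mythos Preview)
Your proof is correct. The paper does not actually give its own proof of Theorem~\ref{Gowerstrick} (it is quoted from Nikolov--Pyber), but exactly the substitution you use---replacing $C$ by $gC^{-1}$, noting $|gC^{-1}|=|C|$, and reducing the triple count to $N(A,B,gC^{-1})=|A||B|\cdot\mathrm{Prob}(A,B,gC^{-1})$---is the argument the paper employs in Section~4 to deduce the second conclusion of Theorem~\ref{main} from the first.
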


In the next paragraph we will show that, in the statement of 
Theorem \ref{main}, we may assume that $G$ is a classical simple group 
$\mathrm{Cl}(n,q)$ where $n$ is the dimension of the 
natural module for the lift of $G$ over the field of 
size $q$ unless $G$ is a unitary group when the field 
has size $q^2$. Furthermore, we will show that we may also assume that in the statement of Theorem \ref{main} this $n$ is sufficiently large.

Let $G$ be a finite simple group of Lie type of rank $r$. We have $k > |G|^{1/8r^{2}}$ by 
\cite[Proposition 2.3]{GPSz}. Choose $\delta$ to be less than $1/8r^{2}$. In this case $k > 
|G|^{\delta}$ and so $|G|^{3-\delta}/\eta^2 > |G|^{3}/\eta^2k$ 
for any given $\eta>0$. Thus Theorem \ref{main}
follows from Theorem \ref{Gowers} and Theorem \ref{Gowerstrick}
when $r$ is bounded. Therefore we may assume that $r$ is unbounded, that is, $G$ is a finite simple classical group $\mathrm{Cl}(n,q)$, where $n$ is unbounded. 

%Let $G$ be a finite simple group of Lie type. We have $k\geq 2$.   
%We claim that in this case in the 
%statement of Theorem \ref{main} we may choose 
%$\delta$ in such a way that 
%\begin{equation}\label{eq:max} 
%	|G|^{3-\delta}/\eta^2 \geq |G|^{3}/\eta^2k.    
%\end{equation}
%If inequality (\ref{eq:max}) holds, then Theorem \ref{main}
%follows from Theorem \ref{Gowers} and Theorem \ref{Gowerstrick}. Note that (\ref{eq:max}) is %satisfied 
%for every sufficiently large group $G$.

%Let $r$ be the rank of $G$. We have $k > |G|^{1/8r^{2}}$ by 
%\cite[Proposition 2.3]{GPSz}. Choose $\delta$ 
%to be less than $1/8r^{2}$. Thus $k > 
%|G|^{\delta}$ and so (\ref{eq:max}) is satisfied. It follows that we may choose $r$ as large as %we want and, in particular, that $G$ is a finite simple classical group $\mathrm{Cl}(n,q)$, where
%we can assume that $n$ also tends to infinity. 

%\section{The second hypothesis in Theorem \ref{main}}

\section{Sets permuted}

The aim of this section is to reduce the proof of Theorem \ref{main} to the case when $A$ and $B$ are normal in $G$ (see Proposition \ref{prop:trick}).

Let $G$ be an arbitrary group. For 
arbitrary subsets $X$, $Y$, $Z$ of $G$, 
let $\mathcal{N}(X,Y,Z)$ be $\{(x,y)\in 
X\times Y\vert xy\in Z\}$ and let $X^{-1}=\{x^{-1}\vert x\in X\}$.

\begin{lemma}\label{l:rem_cases_1}
For arbitrary subsets $X$, $Y$, $Z$ of a group $G$, the three sets $\mathcal{N}(X,Y,Z)$, $\mathcal{N}(Y,Z^{-1},X^{-1})$, $\mathcal{N}(Z^{-1},X,Y^{-1})$ have the same cardinality.
\end{lemma}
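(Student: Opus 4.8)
The plan is to establish the three claimed equalities by exhibiting explicit bijections. Observe that the triple $(x,y,z)\in X\times Y\times Z$ with $xy=z$ is essentially the same datum as a triple with product equal to a fixed element; indeed, $xy=z$ is equivalent to $xyz^{-1}=1$, which is the symmetric condition we want to exploit. So first I would reformulate: $\mathcal{N}(X,Y,Z)$ is in obvious bijection (via $(x,y)\mapsto(x,y,xy)$) with the set of triples $T(X,Y,Z):=\{(x,y,z)\in X\times Y\times Z\mid xyz^{-1}=1\}$, because $z=xy$ is determined by $x,y$ and the condition $xy\in Z$ is exactly the requirement that this forced $z$ lies in $Z$. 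Hence it suffices to show that $T(X,Y,Z)$, $T(Y,Z^{-1},X^{-1})$ and $T(Z^{-1},X,Y^{-1})$ all have the same size.

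Next I would write down the cyclic-shift map. Given $(x,y,z)\in T(X,Y,Z)$, so that $xyz^{-1}=1$, i.e. $xy=z$, set $(x',y',z'):=(y, z^{-1}, x^{-1})$. Then $x'\in Y$, $y'\in Z^{-1}$, $z'\in X^{-1}$, and we must check $x'y'(z')^{-1}=1$, i.e. $y\cdot z^{-1}\cdot x=1$. From $xy=z$ we get $y=x^{-1}z$, so $yz^{-1}x=x^{-1}zz^{-1}x=x^{-1}x=1$, as required; thus $(x',y',z')\in T(Y,Z^{-1},X^{-1})$. This map has an evident inverse given by $(x',y',z')\mapsto ((z')^{-1}, x', (y')^{-1})$, so it is a bijection $T(X,Y,Z)\to T(Y,Z^{-1},X^{-1})$. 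Applying the same construction once more to $(Y,Z^{-1},X^{-1})$ (noting $(Z^{-1})^{-1}=Z$ and $(X^{-1})^{-1}=X$) yields a bijection $T(Y,Z^{-1},X^{-1})\to T(Z^{-1},X,Y^{-1})$. Composing, all three sets biject, and transporting back through the reformulation $\mathcal{N}\leftrightarrow T$ gives the lemma.

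Honestly, there is no real obstacle here: the statement is a purely formal book-keeping fact about the symmetry of the relation "product equals the identity" under cyclic permutation of the factors (with inverses inserted to compensate for the non-commutativity), and the only thing to be careful about is getting the inverses in the right places so that the domains and codomains match the stated sets $\mathcal{N}(Y,Z^{-1},X^{-1})$ and $\mathcal{N}(Z^{-1},X,Y^{-1})$ exactly. I would present it as the two displayed bijections above together with a one-line verification that each is inverse to the other, rather than invoking any machinery; the reformulation through $T(\cdot,\cdot,\cdot)$ is optional and could be suppressed in favour of working directly with pairs, but it makes the cyclic symmetry transparent.
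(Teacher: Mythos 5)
Your proof is correct and is essentially the paper's argument: after transporting through your bijection $(x,y)\leftrightarrow(x,y,xy)$, your cyclic shift $(x,y,z)\mapsto(y,z^{-1},x^{-1})$ is exactly the paper's map $\phi_{1}(x,y)=(y,(xy)^{-1})$, with the same explicit inverse and the same ``apply the construction once more to $(Y,Z^{-1},X^{-1})$'' step for the second equality. The reformulation via triples with $xyz^{-1}=1$ is a purely cosmetic (and arguably clarifying) repackaging of the same bijections.
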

\begin{proof}
Let $\phi_{1}$ be the map from 
the set $\mathcal{N}(X,Y,Z)$ to 
the set $\mathcal{N}(Y,Z^{-1},X^{-1})$ defined 
by $\phi_{1}(x,y)=(y,(xy)^{-1})$, for 
every $(x,y)\in \mathcal{N}(X,Y,Z)$. 
Let $\phi_{2}$ be the map from 
$\mathcal{N}(Y,Z^{-1},X^{-1})$ to 
$\mathcal{N}(X,Y,Z)$ defined by 
$\phi_{2}(y,z^{-1})=(zy^{-1},y)$, for 
every $(y,z^{-1})\in 
\mathcal{N}(Y,Z^{-1},X^{-1})$. We claim 
that both $\phi_{1}$ and $\phi_{2}$ are 
bijections and that they are inverses of 
one another. For this it is sufficient to 
see that the maps $\phi_{2} \circ \phi_{1}
$ and $\phi_{1} \circ \phi_{2}$ are the 
identity maps on $\mathcal{N}(X,Y,Z)$ and 
on $\mathcal{N}(Y,Z^{-1},X^{-1})$ 
respectively. Indeed, for arbitrary $(x,y) 
\in \mathcal{N}(X,Y,Z)$, we have $$
(\phi_{2} \circ \phi_{1})(x,y) = \phi_{2}
(\phi_{1}(x,y)) = \phi_{2}((y,(xy)^{-1})) 
= ((xy)y^{-1},y) = (x,y)$$ and for 
arbitrary $(y,z^{-1}) \in \mathcal{N}
(Y,Z^{-1},X^{-1})$, we have $$(\phi_{1} 
\circ \phi_{2})(y,z^{-1}) = \phi_{1}
(\phi_{2}(y,z^{-1})) = \phi_{1}
((zy^{-1},y)) = (y,(zy^{-1}y)^{-1}) = 
(y,z^{-1}).$$ 
This shows that $\mathcal{N}(X,Y,Z)$ and 
$\mathcal{N}(Y,Z^{-1},X^{-1})$ are in 
bijection. 

Finally, to prove that $\mathcal{N}(Y,Z^{-1},X^{-1})$ is in bijection with 
$\mathcal{N}(Z^{-1},X,Y^{-1})$, it is enough to repeat the 
argument above with $(Y,Z^{-1},X^{-1})$ in place of $(X,Y,Z)$. 
\end{proof} 
A consequence of Lemma \ref{l:rem_cases_1} is the following.
\begin{corollary}
\label{3.2}	
Let $G$ be a finite group and let $A,B,C$ be non-empty subsets of $G$. Then 
\begin{equation}\label{eq:N}
N(B,C^{-1},A^{-1})=N(A,B,C)=
N(C^{-1},A,B^{-1})
\end{equation}
and 
\begin{equation}\label{eq:last}
\frac{|C|}{|A|}\cdot \mathrm{Prob}(B,C^{-1},A^{-1}) =
\mathrm{Prob}(A,B,C) =
\frac{|C|}{|B|}\cdot \mathrm{Prob}(C^{-1},A,B^{-1}).
\end{equation}
\end{corollary}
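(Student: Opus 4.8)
The plan is to obtain both displayed assertions as formal consequences of Lemma \ref{l:rem_cases_1}, using nothing beyond the facts that $|X^{-1}|=|X|$ for every finite $X\subseteq G$ and that $\mathrm{Prob}(X,Y,Z)=N(X,Y,Z)/(|X||Y|)$ whenever $X,Y$ are non-empty, where I write $N(X,Y,Z)$ for the cardinality $|\mathcal{N}(X,Y,Z)|$. The hypothesis that $A,B,C$ are non-empty is exactly what is needed for the three probabilities appearing in \eqref{eq:last} to be defined.

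First I would apply Lemma \ref{l:rem_cases_1} to the triple $(X,Y,Z)=(A,B,C)$. It gives that $\mathcal{N}(A,B,C)$, $\mathcal{N}(B,C^{-1},A^{-1})$ and $\mathcal{N}(C^{-1},A,B^{-1})$ have equal cardinalities, which is precisely \eqref{eq:N}; nothing more is needed for the first claim.

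For \eqref{eq:last} I would simply unwind the definitions. Since $|A^{-1}|=|A|$, $|B^{-1}|=|B|$ and $|C^{-1}|=|C|$, we have $\mathrm{Prob}(B,C^{-1},A^{-1})=N(B,C^{-1},A^{-1})/(|B||C|)$ and $\mathrm{Prob}(C^{-1},A,B^{-1})=N(C^{-1},A,B^{-1})/(|C||A|)$. Multiplying the former by $|C|/|A|$ gives $N(B,C^{-1},A^{-1})/(|A||B|)$, and multiplying the latter by $|C|/|B|$ gives $N(C^{-1},A,B^{-1})/(|A||B|)$; now \eqref{eq:N} identifies both numerators with $N(A,B,C)$, so each product equals $N(A,B,C)/(|A||B|)=\mathrm{Prob}(A,B,C)$.

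I do not expect any real obstacle here: the mathematical content is entirely carried by Lemma \ref{l:rem_cases_1}. The only point requiring a little care is the bookkeeping --- keeping track of which of $|A|$, $|B|$ (and the common value $|C|=|C^{-1}|$) lands in each denominator after the relabelling --- since this is what forces the two normalizing factors in \eqref{eq:last} to be $|C|/|A|$ and $|C|/|B|$ respectively rather than a single common constant.
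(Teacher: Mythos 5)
Your proposal is correct and matches the paper's argument, which likewise obtains \eqref{eq:N} directly from Lemma \ref{l:rem_cases_1} applied to $(A,B,C)$ and then derives \eqref{eq:last} by unwinding the definition $\mathrm{Prob}(X,Y,Z)=N(X,Y,Z)/(|X||Y|)$ together with $|X^{-1}|=|X|$. The bookkeeping of the normalizing factors $|C|/|A|$ and $|C|/|B|$ is exactly right.
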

\begin{proof}
Recall that  for arbitrary non-empty subsets $X$, $Y$, $Z$ in
a finite group $G$, we defined $N(X,Y,Z)$ to be $|\mathcal{N}(X,Y,Z)|$ and $\mathrm{Prob}(X,Y,Z)$ to be $N(X,Y,Z)/|X||Y|$.
Conclusion (\ref{eq:N}) is a direct consequence of Lemma \ref{l:rem_cases_1} and (\ref{eq:last}) follows from (\ref{eq:N}).
\end{proof}
We introduce some more notation. Fix $g\in G$. For subsets $X,Y,Z$ of $G$, set
$$\mathcal{N}(X,Y,Z,g)=\{(x,y,z)\in X\times Y\times Z\vert xyz=g\}.$$
\begin{lemma}\label{l:rem_cases_2}
Let $G$ be a group, let $X,Y,Z$ be  subsets of $G$ and let $g\in G$. Let $Z$ be normal in $G$. The following hold.
\begin{itemize}
\item[(i)] The sets $\mathcal{N}(X,Y,Z,g)$ and $\mathcal{N}(X,Z,Y,g)$ 
have the same cardinality.
\item[(ii)] If $Y$ is a normal subset in $G$, then the sets $\mathcal{N}(X,Y,Z,g)$ and $\mathcal{N}(Y,Z,X,g)$ have the same cardinality.
\end{itemize}
\end{lemma}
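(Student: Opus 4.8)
The plan is to build explicit bijections between the triple-sets, exactly mirroring the strategy used for the pair-sets in Lemma \ref{l:rem_cases_1}, but now with an extra coordinate and using the normality hypotheses to rearrange terms. For part (i), I would define $\psi\colon\mathcal{N}(X,Y,Z,g)\to\mathcal{N}(X,Z,Y,g)$ by sending $(x,y,z)$ to $(x,z',y')$ for suitable conjugates $z'$, $y'$ of $z$, $y$ chosen so that the new product is still $g$. Concretely, from $xyz=g$ we get $x(z^{y^{-1}})y = g$ where $z^{y^{-1}} = y^{-1}zy$ wait — let me instead use $xyz = g \iff x\,(yzy^{-1})\,y = g$, so set $\psi(x,y,z) = (x,\, yzy^{-1},\, y)$. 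Since $Z$ is normal, $yzy^{-1}\in Z$, so the image lies in $X\times Z\times Y$ and its product is $g$; hence $\psi$ maps into $\mathcal{N}(X,Z,Y,g)$. The inverse map sends $(x,z,y)\in\mathcal{N}(X,Z,Y,g)$ to $(x,\,y,\,y^{-1}zy)$, which lies in $X\times Y\times Z$ by normality of $Z$ and has product $g$; one checks directly that the two composites are the respective identity maps.

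For part (ii) I would similarly produce a bijection $\mathcal{N}(X,Y,Z,g)\to\mathcal{N}(Y,Z,X,g)$. The natural candidate is a cyclic rotation: from $xyz=g$ we want a triple $(y',z',x')\in Y\times Z\times X$ with $y'z'x'=g$. Rotating gives $y\,z\,x = x^{-1}gx = g^{x}$, not $g$; to correct this I conjugate. Noting $xyz=g$ implies $(x^{-1}\!\cdot\! x)yzx\cdot x^{-1} $ — cleaner: $xyz = g \iff yzx = x^{-1}gx$, and then conjugating the whole equation by $x$ is circular. Instead observe $xyz=g \iff y\,z\,(xg^{-1}) \cdot g = g$, hmm. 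The robust approach: write $xyz = g \iff (x^{-1}) g (x) \cdot$ — I will instead use both normality assumptions. Since $Y$ and $Z$ are both normal, from $xyz=g$ set $y' = x^{-1}yx\in Y$ and $z'=x^{-1}zx\in Z$; then $y'z'x = x^{-1}(yz)x\cdot x = x^{-1}(yz)(xx)$, which is not obviously $g$. Let me reconsider: $xyz = g$ gives $yz = x^{-1}g$, so $(x^{-1}yx)(x^{-1}zx)x = x^{-1}(yz)x\cdot x = x^{-1}(x^{-1}g)x\cdot x = x^{-2}gx\cdot x$. That fails. The correct fix is to rotate and then conjugate by the element that was moved to the end: $\mathcal{N}(X,Y,Z,g)\to\mathcal{N}(Y,Z,X,g)$ by $(x,y,z)\mapsto (x^{-1}yx,\, x^{-1}zx,\, x)$ followed by noting the product is $x^{-1}(yz)x\cdot x$ — still wrong, so instead I should conjugate by $x^{-1}$ on the other side, or simply compose part (i) with the analogous statement of part (i) applied with the roles of the two normal sets swapped.

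Indeed, the cleanest route for (ii) is to \emph{iterate} (i). Part (i) (with its hypothesis that the last set is normal) already lets me transpose adjacent entries when one is normal. Starting from $\mathcal{N}(X,Y,Z,g)$ with $Y,Z$ both normal, apply (i) to swap $Y$ and $Z$ — but (i) as stated needs the last slot normal; by the symmetry of the argument (or by reproving it for swapping the first two slots when the \emph{second} is normal) I get $\mathcal{N}(X,Y,Z,g)\cong\mathcal{N}(Y,X,Z,g)$ using normality of... I see the bookkeeping needs care. The honest plan: prove two "adjacent transposition" bijections directly — (a) $\mathcal{N}(U,V,W,g)\cong\mathcal{N}(U,W,V,g)$ when $V$ or $W$ is normal, via conjugation as in part (i); (b) $\mathcal{N}(U,V,W,g)\cong\mathcal{N}(V,U',W,g)$ with $U'$ a conjugate of $U$, which is essentially free (cyclic relabelling: $uvw=g \iff v(v^{-1}uv)w\cdot$ — no, $uvw=g\iff v w (w^{-1}u^{-1}\cdot)$...). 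I will settle on: for (ii), map $(x,y,z)\mapsto(y,z, (yz)^{-1}g)$; since $xyz=g$ we have $(yz)^{-1}g = (yz)^{-1}xyz = x^{(yz)}$, a conjugate of $x$, which need not lie in $X$ — so this needs $X$ normal, which we do not have.

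Given these false starts, here is the plan I commit to. For (ii) I use the normality of \emph{both} $Y$ and $Z$: define $\Phi\colon(x,y,z)\mapsto\bigl(x y x^{-1},\, x z x^{-1},\, x\bigr)$. Then the three coordinates lie in $Y$, $Z$, $X$ respectively by normality, and their product is $x y x^{-1}\cdot x z x^{-1}\cdot x = x y z x^{-1} x = xyz = g$. Wait — $x y x^{-1}\cdot x z x^{-1}\cdot x = x y (x^{-1}x) z (x^{-1} x) = x y z x^{-1}$... no: $xyx^{-1}\cdot xzx^{-1} = xy(x^{-1}x)zx^{-1} = xyzx^{-1}$, then $\cdot x$ gives $xyzx^{-1}x = xyz = g$. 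Yes. So $\Phi$ is well-defined into $\mathcal{N}(Y,Z,X,g)$, with inverse $(y,z,x)\mapsto(x^{-1}yx, x^{-1}zx, x)$ — but this inverse needs $Y,Z$ normal to land in $X\times Y\times Z$, which holds; and one verifies the composites are the identity. Thus the \textbf{main obstacle}, such as it is, is purely notational: keeping straight which conjugation direction makes each coordinate land in the right (normal) set while preserving the product $g$. There is no deep content — it is the same trick as Lemma \ref{l:rem_cases_1}, and I would present it as a short direct verification of the two maps and their composites, exactly as done there.
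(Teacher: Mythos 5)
Your final maps are exactly the paper's: for (i) the conjugation $(x,y,z)\mapsto(x,\,yzy^{-1},\,y)$ with inverse $(x,z,y)\mapsto(x,\,y,\,y^{-1}zy)$, and for (ii) the map $(x,y,z)\mapsto(xyx^{-1},\,xzx^{-1},\,x)$ using normality of both $Y$ and $Z$, so the approach is identical. The one slip is your stated inverse for (ii): it should be $(y,z,x)\mapsto(x,\,x^{-1}yx,\,x^{-1}zx)$, which lands in $X\times Y\times Z$ and has product $x\cdot x^{-1}yx\cdot x^{-1}zx=yzx=g$; your version $(x^{-1}yx,\,x^{-1}zx,\,x)$ has the coordinates in the wrong order (it lies in $Y\times Z\times X$, and its product is $x^{-1}gx$, not $g$). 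With that correction the verification is complete and matches the paper's proof.
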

\begin{proof}
(i) Let $\eta_1$ be the map from the set 
$\mathcal{N}(X,Y,Z,g)$ to the set $\mathcal{N}(X,Z,Y,g)$ defined by
$\eta_1(x,y,z)=(x,yzy^{-1},y)$ for every
$(x,y,z)\in \mathcal{N}(X,Y,Z,g)$ and let 
$\eta_2$ be the map from $\mathcal{N}(X,Z,Y,g)$ to $\mathcal{N}
(X,Y,Z,g)$ defined by
$\eta_2(x,z,y)=(x,y,y^{-1}zy)$ for every
$(x,z,y)\in \mathcal{N}(X,Z,Y,g)$. We claim that 
$\eta_2\circ\eta_1$ is the identity map on 
$\mathcal{N}(X,Y,Z,g)$ and that $\eta_1\circ\eta_2$ is the identity 
map on $\mathcal{N}(X,Z,Y,g)$.
For arbitrary $(x,y,z) 
\in \mathcal{N}(X,Y,Z,g)$, we have 
\begin{align*}
(\eta_{2} \circ \eta_{1})(x,y,z) & = \eta_{2}
(\eta_{1}(x,y,z)) = \eta_{2}((x,yzy^{-1},y)) \\ 
& = (x,y,y^{-1}(yzy^{-1})y) = (x,y,z)
\end{align*}
and for 
arbitrary $(x,z,y) \in \mathcal{N}
(X,Z,Y,g)$, we have 
\begin{align*}
(\eta_{1} 
\circ \eta_{2})(x,z,y) & = \eta_{1}
(\eta_{2}(x,z,y))  = \eta_{1}
((x,y,y^{-1}zy)) \\
&= (x,y(y^{-1}zy)y^{-1},y) = 
(x,z,y).
\end{align*}

(ii) Let $\theta_1$ be the map from the set  $\mathcal{N}(X,Y,Z,g)$ to the set $\mathcal{N}(Y,Z,X,g)$ defined by
$\theta_1(x,y,z)=(xyx^{-1},xzx^{-1},x)$ for every
$(x,y,z)\in \mathcal{N}(X,Y,Z,g)$ and let 
$\theta_2$ be the map from the set $\mathcal{N}(Y,Z,X,g)$ to 
the set $\mathcal{N}(X,Y,Z,g)$ defined by
$\theta_2(y,z,x)=(x,x^{-1}yx,x^{-1}zx)$ for every
$(y,z,x)\in \mathcal{N}(Y,Z,X,g)$. We claim that 
$\theta_2\circ\theta_1$ is the identity map on 
the set $\mathcal{N}(X,Y,Z,g)$ and that $\theta_1\circ\theta_2$ 
is the 
identity map on the set $\mathcal{N}(Y,Z,X,g)$.
For arbitrary $(x,y,z) 
\in \mathcal{N}(X,Y,Z,g)$, we have 
\begin{align*}
(\theta_{2} \circ \theta_{1})(x,y,z) & = \theta_{2}
(\theta_{1}(x,y,z)) = \theta_{2}((xyx^{-1},xzx^{-1},x)) \\ 
& = (x,x^{-1}(xyx^{-1})x,x^{-1}(xzx^{-1})x) = (x,y,z)
\end{align*}
and for 
arbitrary $(y,z,x) \in \mathcal{N}
(Y,Z,X,g)$, we have 
\begin{align*}
(\theta_{1} 
\circ \theta_{2})(y,z,x) & = \theta_{1}
(\theta_{2}(y,z,x))  = \theta_{1}
((x,x^{-1}yx,x^{-1}zx)) \\
&= (x(x^{-1}yx)x^{-1}, x(x^{-1}zx)x^{-1},x ) = 
(y,z,x).
\end{align*}
\end{proof}
Note that if $G$ is finite, then  
$|\mathcal{N}(X,Y,Z,g)|=N(X,Y,gZ^{-1})$.

\begin{proposition}\label{prop:trick}
If Theorem \ref{main} is true in the special case when $A$ and $B$ are normal, then Theorem \ref{main} is true in general. 
\end{proposition}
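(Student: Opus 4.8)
The plan is to reduce the two remaining configurations of the normal pair, namely ``$A,C$ normal'' and ``$B,C$ normal'', to the configuration ``$A,B$ normal'' by relabelling the triple $(A,B,C)$, and then to transfer both conclusions of Theorem \ref{main} across the relabelling by means of the cardinality-preserving bijections of this section. Two observations make every such relabelling harmless for the hypotheses: the product $|A||B||C|$ is symmetric in the three subsets and unaffected by replacing any of them by its inverse, so hypothesis (2) of Theorem \ref{main} persists; and a subset is normal precisely when its inverse is, so the number of normal subsets among any relabelled triple remains at least two. Hence in each case below the rearranged triple again satisfies the hypotheses of the special case we are assuming, with the same $\delta$ and $\eta$.

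First I would treat, for a fixed $g\in G$, the triple count $N=|\mathcal{N}(A,B,C,g)|$. If $A$ and $B$ are normal there is nothing to do. If $A$ and $C$ are normal, Lemma \ref{l:rem_cases_2}(i) applied with $Z=C$ gives $|\mathcal{N}(A,B,C,g)|=|\mathcal{N}(A,C,B,g)|$; the triple $(A,C,B)$ has its first two members normal, so the special case of Theorem \ref{main} places $|\mathcal{N}(A,C,B,g)|$ strictly between $(1-\eta)|A||C||B|/|G|$ and $(1+\eta)|A||C||B|/|G|$, which is the required estimate for $N$ since $|A||C||B|=|A||B||C|$. If instead $B$ and $C$ are normal, Lemma \ref{l:rem_cases_2}(ii) applied with $Y=B$ and $Z=C$ gives $|\mathcal{N}(A,B,C,g)|=|\mathcal{N}(B,C,A,g)|$, and the same argument with the triple $(B,C,A)$ finishes this part. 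Thus the triple-count conclusion of Theorem \ref{main} holds in general.

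Next I would deduce the bound on $\mathrm{Prob}(A,B,C)$ from the triple-count bound just established. Since $C$ is normal exactly when $C^{-1}$ is, the triple $(A,B,C^{-1})$ also satisfies the hypotheses of Theorem \ref{main}, so applying the (already proven) general triple-count conclusion to it with $g=1$, and invoking the identity $|\mathcal{N}(A,B,C^{-1},1)|=N(A,B,(C^{-1})^{-1})=N(A,B,C)$ from the remark preceding the proposition together with $|C^{-1}|=|C|$, yields $(1-\eta)|A||B||C|/|G|<N(A,B,C)<(1+\eta)|A||B||C|/|G|$; dividing by $|A||B|$ and recalling $\mathrm{Prob}(A,B,C)=N(A,B,C)/(|A||B|)$ gives the claimed two-sided estimate. (Alternatively, the probability estimate follows directly from the special case via Corollary \ref{3.2}: in the configuration ``$A,C$ normal'' the right-hand identity in (\ref{eq:last}) rewrites $\mathrm{Prob}(A,B,C)$ in terms of $\mathrm{Prob}(C^{-1},A,B^{-1})$, and in ``$B,C$ normal'' the left-hand identity rewrites it in terms of $\mathrm{Prob}(B,C^{-1},A^{-1})$, each having its first two subsets normal and the same size product.) As this is purely a bookkeeping reduction assembled from cardinality-preserving bijections, there is no substantive obstacle; the only point demanding care is to select, in each configuration, the relabelling of $(A,B,C)$ that simultaneously moves the normal pair into the first two coordinates and is the one actually produced by the available bijection (Lemma \ref{l:rem_cases_2}(i) or (ii), or Corollary \ref{3.2}), after which the symmetry of the size hypothesis makes its transfer automatic.
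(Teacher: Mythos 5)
Your proposal is correct and follows essentially the same route as the paper: the same relabellings $(A,C,B)$ and $(B,C,A)$ via Lemma \ref{l:rem_cases_2}, and the same rotations of the probability via Corollary \ref{3.2}. The only cosmetic difference is that you establish the triple-count conclusion first and then deduce the probability bound by taking $g=1$ for the triple $(A,B,C^{-1})$, whereas the paper obtains the probability bound directly from the special case via (\ref{eq:last}); both are valid, and you note the paper's route as your alternative.
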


\begin{proof}
Let $A$, $B$, $C$ be subsets of $G$ satisfying conditions (1) and (2) of Theorem \ref{main}.  We have two cases to consider: (i) $A$ and $C$ are normal in $G$ and (ii) $B$ and $C$ are normal in $G$. Observe that if $X$ is a normal set in $G$ then $X^{-1}$ is also normal in $G$.  

If $A$ and $C$ are normal in $G$, then our hypothesis gives 
\begin{equation}
\label{AC}
(1-\eta)\frac{|B|}{|G|}<  \mathrm{Prob}(C^{-1},A,B^{-1})< (1+\eta)\frac{|B|}{|G|}.
\end{equation}
Applying (\ref{eq:last}), we deduce that 
$$(1-\eta)\frac{|C|}{|G|} < \mathrm{Prob}(A,B,C) < (1+\eta)\frac{|C|}{|G|},$$
which is the first conclusion of Theorem \ref{main}. 
Fix  $g$ in $G$. Let 
$N=|\mathcal{N}(A,B,C,g)|$. This is equal to $N(A,B,gC^{-1})$. 
By applying our hypothesis to the triple $(A,C,B)$, we deduce that  
$$(1-\eta)\frac{|A||B||C|}{|G|} <  |\mathcal{N}(A,C,B,g)| <  (1+\eta)\frac{|A||B||C|}{|G|}.$$
But
$$
|\mathcal{N}(A,C,B,g)|
=|\mathcal{N}(A,B,C,g)|=N$$
 by Lemma \ref{l:rem_cases_2},  
and this proves the second conclusion of Theorem \ref{main} in this special case.

If $B$ and $C$ are normal in $G$, then 
by applying our hypothesis to the triple $(B,C^{-1},A^{-1})$ in place of $(A,B,C)$, we deduce that
\begin{equation}
\label{BC}	
 (1-\eta)\frac{|A|}{|G|} <   \mathrm{Prob}(B,C^{-1},A^{-1}) <  (1+\eta)\frac{|A|}{|G|}.
\end{equation}
We get 
$$(1-\eta)\frac{|C|}{|G|} < \mathrm{Prob}(A,B,C) < (1+\eta)\frac{|C|}{|G|},$$ 
by applying (\ref{eq:last}).
Fix  $g\in G$. 
Our hypothesis for the triple $(B,C,A)$ implies that 
$$(1-\eta)\frac{|A||B||C|}{|G|} <  |\mathcal{N}(B,C,A,g)|  <  (1+\eta)\frac{|A||B||C|}{|G|}.$$
But 
$$
|\mathcal{N}(B,C,A,g)|
=|\mathcal{N}(A,B,C,g)|=N$$
by Lemma \ref{l:rem_cases_2}, 
and this proves the second conclusion of Theorem \ref{main} in this special case too. 
\end{proof}

From now on, in order to prove our main result, we may assume that in the statement of Theorem \ref{main}, $A$ and $B$ are normal.

\section{The second conclusion of Theorem \ref{main}}

We claim that the second conclusion of Theorem \ref{main} follows from the first. For this we may assume that $A$ and $B$ are normal in $G$. Fix $g \in G$. The number $N$ of triples $(a,b,c) \in A \times B \times C$ such that $abc = g$ is equal to $N(A,B,gC^{-1})$. Observe that $|gC^{-1}| = |C|$ (and $C$ need not be normal). We get $$(1 - \eta) \frac{|C|}{|G|} < \mathrm{Prob}(A,B,gC^{-1}) < (1 + \eta) \frac{|C|}{|G|}$$ by the first conclusion. The second conclusion now follows from the fact that $\mathrm{Prob}(A,B,gC^{-1}) = N(A,B,gC^{-1})/|A||B|$. 

From now on, we focus on the first conclusion of Theorem \ref{main}.

\section{Changing Hypothesis (2)}
We will show that we may replace Hypothesis (2) of Theorem \ref{main} by (2') below. Let $A$, $B$, $C$ be subsets in $G$. Let $\eta > 0$ and let $\delta > 0$ be as in the statement of Theorem \ref{main}. Hypothesis (2) of Theorem \ref{main} states that $|A||B||C|$ is larger than  $|G|^{3-\delta}/\eta^2$. This implies that $|A|$, $|B|$, $|C|$ are larger than  $|G|^{1-\delta}/\eta^2$. On the other hand, if $|A|$, $|B|$, $|C|$ are larger than  $|G|^{1-(\delta/3)}/\eta^2$, then Hypothesis (2) of Theorem \ref{main} holds. By changing $\delta$ to $\delta/3$, in the rest of the paper we will replace Hypothesis (2) by the following. 

\begin{itemize}
\label{(2')}	
\item[(2')] The subsets $A$, $B$, $C$ have size larger than $|G|^{1-\delta}/\eta^2$.  
\end{itemize}

\section{Three conjugacy classes}

We will prove Theorem \ref{main} in the case when $A$, $B$, $C$ are conjugacy classes. 

Let $G$ be a finite group and let $\mathrm{Irr}(G)$ be the set of complex irreducible characters of $G$. For an element $g \in G$ and a character $\chi \in \mathrm{Irr}(G)$, it is useful to bound $|\chi(g)|$ in terms of a fixed power of $\chi(1)$. Such character bounds were first used in the fundamental paper by Diaconis and Shahshahani \cite{DS} where they were applied to random walks on symmetric groups. The following is a special case of an important theorem of Guralnick, Larsen, Tiep \cite[Theorem 1.3]{GLT}. 

\begin{theorem}[Guralnick, Larsen, Tiep]
	\label{GLT}
	There exists a universal constant 
	$\mu > 0$ such that whenever $G$ is a 
	classical simple group and $g \in G$ satisfies 
	$|C_{G}(g)| \leq {|G|}^{\mu}$, then 
	$|\chi(g)| \leq \chi(1)^{1/10}$ for every $\chi 
	\in \mathrm{Irr}(G)$. 
\end{theorem}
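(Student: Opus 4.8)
Since this statement is recorded as a special case of \cite[Theorem 1.3]{GLT}, the task is to reproduce the relevant portion of the Guralnick--Larsen--Tiep argument, and here is the route I would take. First I would reduce to quasisimple classical groups. Passing from $G$ to its full covering group $\widetilde G=\mathrm{Cl}(n,q)$ changes $|C(g)|$ by at most the bounded factor $|Z(\widetilde G)|$, and an irreducible character of $G$ inflates to one of $\widetilde G$ of the same degree whose values on preimages differ only by roots of unity; by Clifford theory one may then move between $\mathrm{SL},\mathrm{SU},\mathrm{Sp},\mathrm{Spin}$ and the slightly larger $\mathrm{GL},\mathrm{GU},\mathrm{GSp},\mathrm{GO}$ at a cost that is absolutely bounded. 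So it is enough to prove, for $\widetilde G=\mathrm{Cl}(n,q)$ with $n$ large, that $|C_{\widetilde G}(g)|\le |\widetilde G|^{\mu}$ forces $|\chi(g)|\le \chi(1)^{1/10}$ for all $\chi\in\mathrm{Irr}(\widetilde G)$.

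The core of the plan is the \emph{level} machinery. For a classical group every $\chi$ has a (true) level $\mathfrak{l}(\chi)$, defined through the decomposition of tensor powers of the total Weil (or natural-permutation-type) character afforded by the natural module. Two structural inputs are needed: a degree lower bound of the shape $\chi(1)\ge q^{\,c_1 n\,\mathfrak{l}(\chi)}$ valid while $\mathfrak{l}(\chi)\le c_2 n$, together with $\chi(1)\ge q^{\,c_3 n^2}$ once $\mathfrak{l}(\chi)>c_2 n$; and a value upper bound of the shape $|\chi(g)|\le q^{\,c_4\,\mathfrak{l}(\chi)\,(n-d(g))+O(\mathfrak{l}(\chi))}$, where $d(g)$ is the \emph{support} of $g$, that is, $n$ minus the largest dimension of an eigenspace of (a lift of) $g$ on the natural module. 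These are exactly the technical hearts of \cite{GLT} and of the preceding work on character levels; they are proved by realizing the low-level characters inside Weil representations / dual pairs and by Deligne--Lusztig theory for the rest.

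Next I would exploit the hypothesis. If $|C_{\widetilde G}(g)|\le |\widetilde G|^{\mu}\le q^{\mu n^2}$ then a $\lambda$-eigenspace of dimension $m$ on the natural module contributes a $\mathrm{GL}_m$-type factor $q^{m^2-O(m)}$ to the centralizer, so the largest eigenspace has dimension $\le\sqrt{c\mu}\,n$, i.e. $n-d(g)\le\sqrt{c\mu}\,n$; likewise the semisimple part of $g$ has reductive-rank and number of elementary divisors $O(\mu n)$. Now combine with the level dichotomy. For $\chi$ with $\mathfrak{l}(\chi)$ below an absolute constant $\ell_0$ one uses explicit bounds for the finitely many "bottom" characters (Weil characters of $\mathrm{Sp}$ and $\mathrm{SU}$, small-level linear and unipotent Deligne--Lusztig characters): their values on $g$ are bounded by fixed-point counts on the natural module and its small tensor/exterior powers, hence by $q^{O(\ell_0)}\cdot(\text{small power of }q)$, while $\chi(1)\ge q^{c_1 n}$ beyond the linear characters, so for $n$ large and $q\ge 2$ we get $|\chi(g)|\le\chi(1)^{1/10}$. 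For $\ell_0<\mathfrak{l}(\chi)\le c_2 n$, feed $n-d(g)\le\sqrt{c\mu}\,n$ into the value bound to get $|\chi(g)|\le q^{\,c_4\sqrt{c\mu}\,n\,\mathfrak{l}(\chi)+O(\mathfrak{l}(\chi))}$ against $\chi(1)\ge q^{c_1 n\,\mathfrak{l}(\chi)}$; choosing $\mu$ so that $10c_4\sqrt{c\mu}<c_1$ and $\ell_0$ large enough to absorb the $O(\mathfrak{l})$ error gives $|\chi(g)|^{10}<\chi(1)$. For $\mathfrak{l}(\chi)>c_2 n$ the cruder estimate $|\chi(g)|\le q^{O(\sqrt{c\mu}\,n^2)}$ against $\chi(1)\ge q^{c_3 n^2}$ wins for $\mu$ small. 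Taking $\mu$ to be the minimum of the finitely many thresholds produced above yields the universal constant.

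The main obstacle is the second paragraph: getting a value bound $|\chi(g)|$ that is genuinely linear in $\mathfrak{l}(\chi)$ and in the support $n-d(g)$, \emph{with a small enough implied constant}, uniformly over the four families and over all $q$ (in particular $q=2,3$, where no generic-position argument is available), and doing so while keeping the degree bound strong enough that the factor $10$ can be beaten after the centralizer hypothesis is used. This is precisely where \cite{GLT} expend their effort, and I would not expect a short detour around it. A secondary, purely organizational difficulty is tracking the covering-group and $\mathrm{GL}$-versus-$\mathrm{SL}$ reductions of the first step carefully enough that all constants remain absolute.
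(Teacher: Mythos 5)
The paper does not prove this statement at all: it is quoted verbatim as a special case of \cite[Theorem 1.3]{GLT} and used as a black box, so there is no internal argument to compare yours against. Judged on its own terms, your proposal is a faithful high-level roadmap of the Guralnick--Larsen--Tiep strategy (reduction to quasisimple covers, the level $\mathfrak{l}(\chi)$ of a character, degree lower bounds $\chi(1)\ge q^{c_1 n\,\mathfrak{l}(\chi)}$ in the low-level range, value upper bounds controlled by $\mathfrak{l}(\chi)$ times the support $n-d(g)$, and the observation that $|C_G(g)|\le |G|^{\mu}$ forces every eigenspace of $g$ to have dimension at most $O(\sqrt{\mu})\,n$). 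The final optimization --- choosing $\mu$ so that the value bound loses to $\chi(1)^{1/10}$ in each level regime --- is also the right shape.

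However, as a proof it has a genuine gap, and you essentially concede it yourself: the two ``structural inputs'' of your second paragraph (the degree--level lower bound and, above all, the character value bound $|\chi(g)|\le q^{c_4\mathfrak{l}(\chi)(n-d(g))+O(\mathfrak{l}(\chi))}$ with constants uniform over all four families and all $q$, including $q=2,3$) are not derived; they are exactly the content of \cite{GLT} and of the prior work on character levels, invoked by name. Since the theorem to be proved is nothing but the assembled consequence of those estimates, deferring them to the source means the proposal establishes nothing beyond the citation the paper already makes. Two smaller omissions: the argument is carried out only for $n$ large, whereas the statement is claimed for all classical simple groups (for bounded rank one must note that a sufficiently small universal $\mu$ makes the hypothesis $|C_G(g)|\le|G|^{\mu}$ vacuous, since $|C_G(g)|$ is always at least a fixed root of $|G|$); and the descent from the covering group back to the simple quotient should be checked to preserve the exponent $1/10$ rather than just ``a bounded factor''. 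If the intent is to use the result as the paper does, a citation suffices; if the intent is to reprove it, the hard analytic core is still missing.
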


% We will use Theorem \ref{GLT} to prove Theorem \ref{main} in the special case when $G$ is a classical simple group and the sets $A$, $B$, $C$ are all conjugacy classes of $G$. 

% As before, for subsets $A$, $B$, $C$ in a finite group $G$, let $N(A,B,C)$ be the number of triples $(a,b,c) \in A \times B \times C$ such that $ab = c$. 
Let $A$, $B$, $C$ be conjugacy classes of a finite group $G$ and let $a$, $b$, $c$ be representatives in $A$, $B$, $C$ respectively. We have 
\begin{equation}
	\label{e2}	 
	N(A,B,C) = \frac{|A||B||C|}{|G|} 
	\sum_{\chi \in \mathrm{Irr}(G)} 
	\frac{\chi(a) \chi(b) \overline{\chi(c)}}
	{\chi(1)}
\end{equation}
by \cite[p. 43-44]{ASH}. 

For any positive number $x$, the well-known Witten zeta function $\zeta^{G}(x)$ is defined to be $\sum_{\chi \in \mathrm{Irr}(G)} \chi(1)^{-x}$. A special case of an important theorem of Liebeck and Shalev \cite[Theorem 1.1]{LS2} is the following.

\begin{theorem}[Liebeck, Shalev]\label{zeta}
	For any sequence of non-abelian finite 
	simple groups $G \not= \mathrm{PSL}(2,q)$ (for any prime power $q$) and 
	any $x > 2/3$, $\zeta^{G}(x) \to 1$ as $|G| \to 
	\infty$.
\end{theorem}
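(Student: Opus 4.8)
The plan is to peel off the trivial character and reduce the whole statement to a single uniform estimate on how the degrees of irreducible characters of $G$ are distributed. Since the trivial character contributes exactly $1$ to $\zeta^{G}(x)$, the claim is equivalent to $\sum_{1 \neq \chi \in \mathrm{Irr}(G)} \chi(1)^{-x} \to 0$ as $|G| \to \infty$. Write $k = k(G)$ for the least degree of a non-trivial irreducible character of $G$ and $N_{G}(t) = |\{\chi \in \mathrm{Irr}(G) : \chi(1) \leq t\}|$. Two ingredients are needed. The first is soft: $k(G) \to \infty$ along any sequence of non-abelian simple groups with $|G| \to \infty$ (Landazuri--Seitz lower bounds for the groups of Lie type, $n-1$ for $A_{n}$, and only finitely many sporadic groups can occur). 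The second is the substance: a \emph{uniform} bound of the shape
\[
N_{G}(t) \;\leq\; t^{2/3 + o(1)} \qquad (t \geq 1),
\]
valid for every finite simple group $G \neq \mathrm{PSL}(2,q)$, with the $o(1)$ tending to $0$ as $|G| \to \infty$ and independent of the family in which $G$ lies. Granting the second ingredient, fix $x > 2/3$, choose $\varepsilon > 0$ with $x > 2/3 + \varepsilon$, and take $G$ large enough that $N_{G}(t) \leq t^{2/3+\varepsilon}$ for all $t$. For a threshold $T \geq 1$, split the sum over non-trivial $\chi$ according to whether $\chi(1) \leq T$, bounding the head by $N_{G}(T)\, k^{-x}$ and the tail by Abel summation:
\[
\sum_{1 \neq \chi \in \mathrm{Irr}(G)} \chi(1)^{-x} \;\leq\; N_{G}(T)\, k^{-x} \;+\; x \int_{T}^{\infty} N_{G}(t)\, t^{-x-1}\, dt \;\leq\; N_{G}(T)\, k^{-x} \;+\; \frac{x}{x - 2/3 - \varepsilon}\; T^{-(x - 2/3 - \varepsilon)} .
\]
Given $\theta > 0$, first fix $T$ so large that the second term is $< \theta/2$ for every $G$; with $T$ fixed, for all sufficiently large $G$ we have $k(G) > T$, so $N_{G}(T) = 1$ and the head is bounded by $k(G)^{-x} < \theta/2$. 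Thus the left side is eventually $< \theta$, that is, $\zeta^{G}(x) \to 1$.

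It remains to indicate how the uniform degree count $N_{G}(t) \leq t^{2/3 + o(1)}$ is proved, which goes through the classification of finite simple groups. Sporadic groups are irrelevant, as only finitely many appear in a sequence. For $G = A_{n}$ the non-trivial degrees are the $f^{\lambda}$ with $\lambda \vdash n$, $\lambda \neq (n)$, and the partitions with $f^{\lambda} \leq t$ are exactly those whose Young diagram hugs the boundary staircase; a hook-length estimate bounds their number by $t^{o(1)}$, so alternating groups actually satisfy the stronger estimate with exponent $0$. For $G$ of Lie type one uses Lusztig's Jordan decomposition of characters: $\mathrm{Irr}(G)$ is parametrised by pairs $(s, \psi)$ with $s$ a semisimple conjugacy class of the dual group $G^{*}$ and $\psi$ a unipotent character of $C_{G^{*}}(s)$, and $\chi_{(s,\psi)}(1) = |G^{*} : C_{G^{*}}(s)|_{p'}\, \psi(1)$. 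The number of semisimple classes whose centraliser has a prescribed (pseudo-Levi) type, together with the unipotent degrees of that centraliser, are controlled by data generic in $q$; summing these contributions one checks that whenever $G$ carries $\approx q^{a}$ irreducible characters of degree $\approx q^{b}$ one has $a/b \leq 2/3$. The extremal configuration is the family of $\approx q^{2}$ regular semisimple characters of $\mathrm{PSL}(3,q)$, and equally of $\mathrm{PSU}(3,q)$, which have degree $\approx q^{3}$: this is precisely why the exponent in the statement is $2/3$ and cannot be lowered. The only family violating the bound is $\mathrm{PSL}(2,q) = A_{1}(q)$, with its $\approx q/2$ characters of degree $\approx q$, forcing the exponent up to $1$ — hence the exclusion.

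The main obstacle is exactly this last uniform estimate for the groups of Lie type, and in particular obtaining it \emph{simultaneously} in the field size $q$ and in the Lie rank. When the rank is bounded, one can argue directly from the generic character table: there are only finitely many degree polynomials $D(q)$, each non-trivial one of positive degree in $q$, each carrying polynomially many characters, so $\sum_{1 \neq \chi}\chi(1)^{-x}$ is literally a finite sum of terms $(\text{polynomial in } q) \cdot q^{-x \deg D}$, and one checks the exponents family by family. When the rank is unbounded the total number of irreducible characters is only $|G|^{o(1)}$, yet the crude bound $(\#\,\mathrm{Irr}(G)) \cdot k^{-x}$ is still too weak; instead one uses $\sum_{\chi} \chi(1)^{2} = |G|$ to see that all but a vanishing proportion of the ``degree mass'' lies at degree $\gtrsim |G|^{1/2 - o(1)}$, while a careful count of the few characters of degree near $k$ keeps $N_{G}(t) \leq t^{2/3 + o(1)}$ with constants independent of the rank. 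Assembling the alternating, Lie type, and (vacuous) sporadic cases yields the theorem, which is the special case of \cite[Theorem 1.1]{LS2} of Liebeck and Shalev quoted above.
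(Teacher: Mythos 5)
The paper does not prove this statement at all: it is quoted verbatim as a special case of \cite[Theorem 1.1]{LS2} of Liebeck and Shalev and used as a black box, so there is no internal proof to compare against. Judged on its own terms, your skeleton is the right one and matches the architecture of the actual Liebeck--Shalev argument: peel off the trivial character, reduce everything to a uniform count $N_{G}(t)\leq t^{2/3+o(1)}$ of character degrees, and finish by partial summation (that computation, and your identification of $\mathrm{PSL}(3,q)$ and $\mathrm{PSU}(3,q)$ as the extremal families forcing the exponent $2/3$, and of $\mathrm{PSL}(2,q)$ as the excluded family, are all correct).

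The genuine gap is that the uniform estimate $N_{G}(t)\leq t^{2/3+o(1)}$ \emph{is} the theorem, and you assert it rather than prove it. For bounded rank, ``one checks the exponents family by family'' from generic character tables is a programme, not an argument, and it is exactly where the work lies. For unbounded rank the sketch as written does not close: the identity $\sum_{\chi}\chi(1)^{2}=|G|$ tells you where the degree \emph{mass} concentrates but gives no upper bound on $N_{G}(t)$ for intermediate $t$ (it is consistent with having $t^{0.9}$ characters of degree $t$ for $t$ far below $|G|^{1/2}$, which would destroy the Abel-summation tail). What is actually needed there is quantitative control on the number of characters of each bounded ``level'' together with degree gap results, or the inductive reduction to general linear groups that Liebeck and Shalev carry out; none of this is supplied. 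So the proposal is a correct outline of a known deep proof, not a proof, and in the context of this paper the honest move is the one the authors make: cite \cite[Theorem 1.1]{LS2}.
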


We are now in the position to prove Theorem \ref{main} in the special case when the sets $A$, $B$, $C$ are conjugacy classes in $G$. For this, we may assume that $G$ is a classical simple group $\mathrm{Cl}(n, q)$ where $n$ is sufficiently large and we may replace Hypothesis (2) by (2').  

\begin{theorem}\label{p:G_classical}
Let $G$ be a classical simple group $\mathrm{Cl}(n, q)$. Fix $\eta > 0$. There is a $\delta$ with $0 < \delta < 1$ such that whenever $A$, $B$, $C$ are conjugacy classes of $G$ each of size larger than $|G|^{1-\delta}/\eta^{2}$, then 
$$(1-\eta)\frac{|C|}{|G|} < \mathrm{Prob}(A,B,C) < (1+\eta)\frac{|C|}{|G|}.$$ 
\end{theorem}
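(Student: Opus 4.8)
The plan is to expand $\mathrm{Prob}(A,B,C)$ into a character sum and to show that every contribution other than that of the trivial character has absolute value less than $\eta$. Fix representatives $a\in A$, $b\in B$, $c\in C$. Combining formula (\ref{e2}) with $\mathrm{Prob}(A,B,C)=N(A,B,C)/|A||B|$ gives
\begin{align*}
\mathrm{Prob}(A,B,C)&=\frac{|C|}{|G|}\sum_{\chi\in\mathrm{Irr}(G)}\frac{\chi(a)\chi(b)\overline{\chi(c)}}{\chi(1)}\\
&=\frac{|C|}{|G|}\left(1+\sum_{1\neq\chi\in\mathrm{Irr}(G)}\frac{\chi(a)\chi(b)\overline{\chi(c)}}{\chi(1)}\right),
\end{align*}
so the conclusion of Theorem \ref{p:G_classical} is equivalent to the estimate $\bigl|\sum_{1\neq\chi\in\mathrm{Irr}(G)}\chi(a)\chi(b)\overline{\chi(c)}/\chi(1)\bigr|<\eta$.

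Next I would feed the lower bounds on the class sizes into the character bound of Guralnick--Larsen--Tiep. Since $|A|=|G|/|C_G(a)|>|G|^{1-\delta}/\eta^{2}$, we get $|C_G(a)|<\eta^{2}|G|^{\delta}$, and likewise for $b$ and $c$. Choosing $\delta$ less than the universal constant $\mu$ of Theorem \ref{GLT} (and less than $1$), and using that $n$, hence $|G|$, is large enough to absorb the factor $\eta^{2}$, we obtain $|C_G(a)|,|C_G(b)|,|C_G(c)|\le|G|^{\mu}$; Theorem \ref{GLT} then yields $|\chi(a)|,|\chi(b)|,|\chi(c)|\le\chi(1)^{1/10}$ for every $\chi\in\mathrm{Irr}(G)$. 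Substituting this,
\begin{align*}
\left|\sum_{1\neq\chi\in\mathrm{Irr}(G)}\frac{\chi(a)\chi(b)\overline{\chi(c)}}{\chi(1)}\right|&\le\sum_{1\neq\chi\in\mathrm{Irr}(G)}\frac{\chi(1)^{3/10}}{\chi(1)}\\
&=\sum_{1\neq\chi\in\mathrm{Irr}(G)}\chi(1)^{-7/10}=\zeta^{G}(7/10)-1.
\end{align*}

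To conclude I would invoke Theorem \ref{zeta}. The exponent $1/10$ in Theorem \ref{GLT} is exactly what makes the resulting Witten exponent $1-3\cdot\tfrac{1}{10}=\tfrac{7}{10}$ strictly larger than the critical value $2/3$. Since $G=\mathrm{Cl}(n,q)$ with $n$ large is not $\mathrm{PSL}(2,q)$ and $|G|\to\infty$ as $n\to\infty$, Theorem \ref{zeta} gives $\zeta^{G}(7/10)\to 1$; hence, for $n$ sufficiently large in terms of $\eta$ (which we are entitled to assume), $\zeta^{G}(7/10)-1<\eta$, and the required estimate follows.

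Since the three quoted theorems do the real work, there is no single hard step here; the only points requiring care are the bookkeeping of constants --- checking that $\delta$ can be taken below $\mu$ (which, for $n$ large, still leaves conjugacy classes of the prescribed size available, so the statement is not vacuous) --- and the recognition that the phrase ``$n$ sufficiently large'' is precisely what makes both the exclusion of $\mathrm{PSL}(2,q)$ and the convergence $\zeta^{G}(7/10)\to1$ quantitatively effective for the given $\eta$.
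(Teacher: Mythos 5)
Your proposal is correct and follows essentially the same route as the paper's own proof: both expand $N(A,B,C)$ (equivalently $\mathrm{Prob}(A,B,C)$) via (\ref{e2}), verify from the class-size hypothesis that the centralizer bound of Theorem \ref{GLT} applies so that each nontrivial character contributes at most $\chi(1)^{-7/10}$, and conclude with Theorem \ref{zeta} by taking $n$ large enough that $\zeta^{G}(7/10)-1<\eta$. The only differences are cosmetic bookkeeping (you spell out the step $|C_G(a)|<\eta^2|G|^{\delta}\le|G|^{\mu}$ which the paper compresses into the inequality $|G|^{1-\delta}/\eta^2>|G|^{1-\mu}$).
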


\begin{proof}
We may choose $n$ large enough by the last paragraph of Section 2. 
	Let $\mu$ be as in Theorem \ref{GLT}. 
	Let $A$, $B$, $C$ be conjugacy classes of $G$ 
	each of size larger than $|G|^{1-\delta}/\eta^{2}>|G|^{1-\mu}$ for some $\delta$. As $n$ may be chosen large enough, we may assume that $\zeta^{G}(7/10) -1 < \eta$ by Theorem \ref{zeta}. Let $a\in A$, $b\in B$ and $c\in C$. We have by (\ref{e2}) that  
	\begin{align*}
		\left|N(A,B,C) - \frac{|A||B||C|}{|G|}\right|& = 
		\frac{|A||B||C|}{|G|}\left|\sum_{1\neq \chi \in \mathrm{Irr}(G)} 
		\frac{\chi(a) \chi(b) \overline{\chi(c)}}
		{\chi(1)}\right|\\
		&\leq \frac{|A||B||C|}{|G|} \sum_{1\neq \chi \in \mathrm{Irr}(G)} 
		\frac{|\chi(a)| |\chi(b)| |\overline{\chi(c)}|}
		{|\chi(1)|}\\
		&\leq \frac{|A||B||C|}{|G|}\sum_{1\neq \chi \in \mathrm{Irr}(G)} 
		\chi(1)^{-7/10}\\
		&=\frac{|A||B||C|}{|G|}(\zeta^{G}(7/10) -1 )\\
		&< \frac{|A||B||C|}{|G|}\eta. 
	\end{align*}
The result follows.
	\end{proof}	

\section{Three normal sets}

We will prove Theorem \ref{main} in the case when the subsets $A$, $B$, $C$ are normal. 
% For this we may assume that $G$ is a classical simple group by Theorem \ref{LM1}. 

\begin{lemma}\label{k(G)}
Let $G = \mathrm{Cl}(n,q)$. There exists a universal constant $c$ such that $$k(G) \leq |G|^{c/n}.$$	
\end{lemma}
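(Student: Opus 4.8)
The plan is to bound $k(G)$, the number of conjugacy classes of $G = \mathrm{Cl}(n,q)$, by comparing it with the number of conjugacy classes of the ambient matrix group and then estimating the latter combinatorially. First I would pass from the simple group $G$ to the corresponding general linear, unitary, symplectic or orthogonal group $\widehat{G} = \mathrm{GL}(n,q), \mathrm{GU}(n,q), \mathrm{Sp}(n,q), \mathrm{O}^{\pm}(n,q)$ acting on the natural module. Since $G$ is obtained from $\widehat{G}$ by passing to a subgroup of bounded index (the derived subgroup, intersected with $\mathrm{SL}$-type conditions) and then quotienting by a central subgroup of order dividing $\gcd(n,q-1)$ or similar, and since each such operation changes the class number by at most a bounded multiplicative factor times a power of $q$ bounded independently of $n$, it suffices to prove $k(\widehat{G}) \le q^{c'n}$ for a universal constant $c'$; absorbing the bounded correction factors (which are at most $q^{O(1)} \le |G|^{O(1/n^2)}$, hence harmless) then yields $k(G) \le |G|^{c/n}$ because $|G|$ is comparable to $q^{n^2}$ up to bounded powers.

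Next I would estimate $k(\widehat{G})$ directly. The standard fact (due essentially to Wall, with clean statements in work of Fulman–Guralnick and of Liebeck–Pyber) is that the number of conjugacy classes of each of these classical matrix groups over $\mathbb{F}_q$ on an $n$-dimensional module is bounded above by a fixed polynomial in $q$ of degree linear in $n$ — concretely one can cite the bound $k(\mathrm{GL}(n,q)) \le q^n$ and the analogous bounds $k(\widehat{G}) \le c_0 q^n$ for the other types, with $c_0$ a universal constant. If a self-contained argument is wanted, conjugacy classes are parametrized by combinatorial data (for $\mathrm{GL}$: a choice, for each monic irreducible polynomial over $\mathbb{F}_q$, of a partition, with the sizes weighted by the polynomial's degree summing to $n$), and a generating-function or crude counting argument shows the number of such data is at most $q^n$ times a constant; the unitary, symplectic and orthogonal cases have parametrizations of the same shape with an extra bounded factor coming from the form-theoretic bookkeeping (pairs of polynomials, quadratic/alternating type labels, and $O(1)$-many components in the orthogonal case).

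Finally I would assemble the bound: writing $k(G) \le C \cdot q^{an}$ for universal constants $C,a$ from the two steps above, and using $|G| \ge q^{bn^2}$ for a universal $b>0$ (true for all four families once $n$ is large, with room to spare), we get $k(G) \le C q^{an} \le |G|^{(an + \log_q C)/(bn^2)} \le |G|^{c/n}$ for a suitable universal $c$, after enlarging $c$ to swallow the lower-order term $\log_q C$. The main obstacle is not any single hard estimate but rather doing the case analysis over the four families uniformly and controlling the passage between the matrix group, its relevant subgroup, and the simple quotient without the correction factors secretly depending on $n$; once one records that these factors are bounded by a fixed power of $q$ (independent of $n$) and that $\log |G| \gg n^2 \log q$, everything collapses to the elementary $q^{O(n)}$ class-number bound.
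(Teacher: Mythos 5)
Your proposal is correct and follows essentially the same route as the paper: the paper simply cites Liebeck--Pyber (and Fulman--Guralnick) for the bound $k(G)\le q^{c_1 n}$, combines it with $|G|\ge q^{c_2 n^2}$ from the order formulas, and concludes $k(G)\le |G|^{c/n}$ with $c=c_1/c_2$. Your additional discussion of passing between the matrix group, its subgroup and the simple quotient is sound but is subsumed by the cited class-number bound, so no further argument is needed.
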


\begin{proof}
	We have $k(G) \leq q^{c_{1} n}$ for some universal constant $c_{1}$ by \cite[Theorem 1.1]{LP} (see also \cite[Corollary 1.2]{FG}). By the order formulas for finite simple classical groups, there exists a universal constant $c_{2} > 0$ such that $|G| = |\mathrm{Cl}(n,q)| \geq q^{c_{2}n^{2}}$. We get 
	$$k(G) \leq q^{c_{1} n} = {(q^{c_{2}n^2})}^{c_{1}/(c_{2}n)} \leq |G|^{c/n},$$ where $c=c_{1}/c_{2}$.
\end{proof}

\begin{lemma}\label{l:norm->conj}
Let $G = \mathrm{Cl}(n,q)$. Fix $\eta > 0$ and $\delta > 0$. Let $X$ be a normal subset of $G$ with $|X| > |G|^{1-\delta}/\eta^2$. For any fixed $\alpha > \delta$, the set $X$ contains a conjugacy class $Y$ of $G$ with $|Y| > |G|^{1-\alpha}/\eta^2$, provided that $n$ is sufficiently large. 
\end{lemma}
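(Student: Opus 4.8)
The plan is to argue by contradiction: suppose $X$ is a normal subset with $|X| > |G|^{1-\delta}/\eta^2$ but every conjugacy class contained in $X$ has size at most $|G|^{1-\alpha}/\eta^2$. Since a normal subset is a disjoint union of conjugacy classes, we would then have
\begin{equation*}
|G|^{1-\delta}/\eta^2 < |X| = \sum_{Y \subseteq X} |Y| \leq k(G) \cdot \frac{|G|^{1-\alpha}}{\eta^2},
\end{equation*}
where the sum runs over the conjugacy classes $Y$ of $G$ contained in $X$ and $k(G)$ is the number of conjugacy classes of $G$. Rearranging gives $k(G) > |G|^{\alpha - \delta}$.

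First I would invoke Lemma \ref{k(G)} to bound $k(G) \leq |G|^{c/n}$ for the universal constant $c$ there. Combining this with the inequality $k(G) > |G|^{\alpha-\delta}$ just obtained yields $|G|^{c/n} > |G|^{\alpha - \delta}$, hence $c/n > \alpha - \delta$, i.e. $n < c/(\alpha - \delta)$. Since $\alpha > \delta$ is fixed, $\alpha - \delta$ is a fixed positive constant, so this forces $n$ to be bounded above by the absolute constant $c/(\alpha-\delta)$. Therefore, once $n$ is chosen larger than $c/(\alpha - \delta)$, the assumption is untenable and $X$ must contain a conjugacy class $Y$ with $|Y| > |G|^{1-\alpha}/\eta^2$, as claimed.

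The argument is short and the only real input is the pigeonhole-type counting together with the bound on the number of conjugacy classes, so I do not anticipate a serious obstacle. The one point requiring a little care is the direction of the inequalities and making sure the threshold "$n$ sufficiently large" is genuinely an absolute bound (depending only on $c$, $\alpha$, $\delta$, and not on $q$); this is clear from the display above since $k(G) \le |G|^{c/n}$ holds with $c$ universal and $|G| \ge q^{c_2 n^2} \ge 2$, so taking logarithms base $|G|$ is legitimate. I would also remark that $\delta$ should be taken strictly smaller than $\alpha$ from the outset, which is consistent with the freedom we have in choosing $\delta$ in the overall proof of Theorem \ref{main}.
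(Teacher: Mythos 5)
Your proof is correct and is essentially identical to the paper's: the paper also argues by contradiction, bounding $|X| \leq k(G)\,|G|^{1-\alpha}/\eta^2 \leq |G|^{1+(c/n)-\alpha}/\eta^2$ via Lemma \ref{k(G)} and deducing $c/n > \alpha - \delta$, which fails for $n$ large. No issues.
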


\begin{proof}
If no such conjugacy class $Y$ of $G$ is contained in the normal subset $X$ of $G$, then $$|G|^{1-\delta}/\eta^2 < |X| \leq k(G) |G|^{1-\alpha}/\eta^2 \leq |G|^{1 + (c/n) -\alpha}/\eta^2$$ by Lemma \ref{k(G)}. Thus $c/n > \alpha - \delta$. This is a contradiction since $c/n$ tends to $0$ as $n$ goes to infinity.  	
\end{proof}

Let $G = \mathrm{Cl}(n,q)$. Fix $\eta$ with  $0<\eta<1$. Let $\delta > 0$ later to be specified. Let $A$, $B$, $C$ be normal subsets of $G$ each of size larger than $|G|^{1-\delta}/\eta^2$. Let $X \in \{ A, B, C \}$. Let $X_1$ be the union of all conjugacy classes in $G$ which are contained in $X$ and each of which have size larger than $|G|^{1-\alpha}/\eta^2$ for some fixed $\alpha > \delta$ soon to be determined (in the end of the proof we will require $\delta>0$ to be small and $\alpha>0$ such that $\alpha>3\delta$). Let us call such conjugacy classes large. Since $n$ may be taken to be sufficiently large, the normal set $X_1$ is non-empty by Lemma \ref{l:norm->conj}. 

Let $K_{a_1}, \ldots , K_{a_r}$ be the list of (distinct) large conjugacy classes of $G$ contained in $A_1$. Similarly, let $K_{b_1}, \ldots , K_{b_s}$ be the list of large conjugacy classes of $G$ contained in $B_1$, and let $K_{c_1}, \ldots , K_{c_t}$ be the list of large conjugacy classes of $G$ contained in $C_1$. Let $a_i$, $b_j$, $c_l$ be fixed indices such that $1 \leq i \leq r$, $1 \leq j \leq s$, $1 \leq l \leq t$. There is a choice of $\delta>0$ in  Theorem \ref{p:G_classical} with $\eta/2$ such that 
$$(1-(\eta/2))\frac{|K_{a_i}||K_{b_{j}}||K_{c_l}|}{|G|} < N(K_{a_i}, K_{b_j}, K_{c_{l}}) < (1+(\eta/2))\frac{|K_{a_i}||K_{b_{j}}||K_{c_l}|}{|G|}.$$ 
 This immediately implies that 
$$(1-(\eta/2)) \frac{|A_1||B_1||C_1|}{|G|} < \sum_{i=1}^{r} \sum_{j=1}^{s} \sum_{l=1}^{t} N(K_{a_{i}}, K_{b_{j}}, K_{c_{l}}) < (1+(\eta/2))\frac{|A_1||B_1||C_1|}{|G|}.$$ Since 
$$N(A_{1},B_{1},C_{1}) = \sum_{i=1}^{r} \sum_{j=1}^{s} \sum_{l=1}^{t} N(K_{a_{i}}, K_{b_{j}}, K_{c_{l}}),$$ it follows that 
\begin{equation}
\label{threenormal}
(1-(\eta/2)) \frac{|A_1||B_1||C_1|}{|G|} < N(A_{1},B_{1},C_{1}) < (1+(\eta/2)) \frac{|A_1||B_1||C_1|}{|G|}. 	
\end{equation}
 
For $X_{2} = X \setminus X_{1}$, we have, by Lemma \ref{k(G)}, that 
\begin{equation}
\label{X2}	
|X_{2}| \leq k(G) |G|^{1-\alpha}/\eta^2 \leq |G|^{1 + (c/n) -\alpha}/\eta^2 \leq \beta|G|^{1-\delta}/\eta^2 < \beta |X|
\end{equation}
for any fixed $\beta > 0$, provided that $n$ is sufficiently large. 
It follows that 
\begin{equation}
\label{X1}	
|X_{1}| > (1-\beta)|X|.
\end{equation}     
Let $i, j, l \in \{1,2 \}$. Observe that $N(A_i,B_j,C_l) \leq |G| \min\{ |A_i|,|B_j|,|C_l| \}$. We have
$$N(A,B,C) = \sum_{i=1, j=1, l=1}^2 N(A_i,B_j,C_l) \leq N(A_1,B_1,C_1)+7|G|\max\{
|A_2|,|B_2|,|C_2|\}.$$ Since $7|G|\max\{ |A_2|,|B_2|,|C_2| \} \leq 7|G|^{2 + (c/n) -\alpha}/\eta^2$ by (\ref{X2}), it follows from this that
\begin{equation}
\label{N(A,B,C)}	
N(A_1,B_1,C_1) \leq N(A,B,C) \leq N(A_1,B_1,C_1) + 7|G|^{2 +  (c/n)  -\alpha}/\eta^2.
\end{equation}
Formulas (\ref{N(A,B,C)}), (\ref{threenormal}), and (\ref{X1}) give $$N(A,B,C) \geq N(A_{1},B_{1},C_{1}) > (1-(\eta/2)) \frac{|A_1||B_1||C_1|}{|G|} > (1-(\eta/2)) (1-\beta)^{3} \frac{|A||B||C|}{|G|}.$$ For $\beta < 1 - {(2(1-\eta)/(2-\eta))}^{1/3}$, we have $(1-(\eta/2)) (1-\beta)^{3} > 1-\eta$, that is, 
\begin{equation}
\label{star}	
N(A,B,C) > (1-\eta) \frac{|A||B||C|}{|G|}.
\end{equation}
On the other hand, (\ref{N(A,B,C)}) and (\ref{threenormal}) provide
\begin{equation}
\label{equation1}
N(A,B,C) < (1+(\eta/2)) \frac{|A||B||C|}{|G|} + 7|G|^{2 +  (c/n)  -\alpha}/\eta^2.
\end{equation}
Now 
\begin{equation}
\label{equation2}	
7|G|^{2 +  (c/n)  -\alpha}/\eta^2 \leq |G|^{2-3\delta}/(2\eta) \leq (\eta/2) \frac{|A||B||C|}{|G|}, 
\end{equation}
provided that $\alpha > 3 \delta$ and $n$ is sufficiently large. Formulas (\ref{equation1}) and (\ref{equation2}) give 
\begin{equation}
\label{starstar}	
N(A,B,C) < (1+\eta) \frac{|A||B||C|}{|G|}.  
\end{equation}
Finally, (\ref{star}) and (\ref{starstar}) provide (the first conclusion of) Theorem \ref{main} in the case when $A$, $B$, $C$ are normal subsets in $G$. 

\section{Product mixing}

For positive numbers $\epsilon$ and $\eta$, Lifshitz and Marmor \cite[Section 2.3]{LM} defined a finite group $G$ to be an $(\epsilon, \eta)$-mixer if for all subsets $A$, $B$, $C$ of $G$ with $|A|$, $|B|$, $|C|$ all at least $\epsilon |G|$, we have $$(1 - \eta) \frac{|C|}{|G|} < \mathrm{Prob}(A,B,C) < (1 + \eta) \frac{|C|}{|G|}.$$ They also say that $G$ is normally an $(\epsilon, \eta)$-mixer if the same holds for all such normal subsets $A$, $B$, $C$. (We remark that these properties were defined for $\eta = 0.01$.) Theorem \ref{Gowers} implies that the alternating group $A_n$ is an $(\epsilon, \eta)$-mixer for $\epsilon = C n^{-1/3}$ where $C = C(\eta)$ is a constant depending only on $\eta$. For normal subsets, this result was improved exponentially. The following may be found in \cite[Theorem 2.5]{LM}. 

\begin{theorem}[Lifshitz, Marmor]
	\label{LM1}	
	For any $\eta > 0$, there exists an absolute constant $c > 0$, such that $A_n$ is normally an
	$(n^{-c n^{1/3}}, \eta)$-mixer.
\end{theorem}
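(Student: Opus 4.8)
The plan is to run a character-sum argument in the spirit of the proof of Theorem~\ref{p:G_classical}, but to replace the crude uniform degree bound $\chi(1)\ge k$ (which for $A_n$ only gives $k=n-1$, hence a threshold of order $n^{-1/3}$) by an argument exploiting that $A$, $B$, $C$ are unions of conjugacy classes. For $X\in\{A,B,C\}$ and $\chi\in\mathrm{Irr}(A_n)$ write $\chi(X)=\sum_{g\in X}\chi(g)$ and $\phi_X(\chi)=\chi(X)/(|X|\chi(1))$, so $|\phi_X(\chi)|\le 1$ always and, by Parseval for the class function $1_X$, $\sum_{\chi}\chi(1)^2|\phi_X(\chi)|^2=|A_n|/|X|$. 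Combining $(\ref{e2})$ with $N(A,B,C)=\sum N(K_a,K_b,K_c)$ (the sum over all triples of conjugacy classes contained in $A\times B\times C$) reduces the mixing conclusion to the inequality
\[
\sum_{1\ne\chi\in\mathrm{Irr}(A_n)}\chi(1)^2\,|\phi_A(\chi)|\,|\phi_B(\chi)|\,|\phi_C(\chi)|\;\le\;\eta ,
\]
and I would prove this by splitting $\mathrm{Irr}(A_n)\setminus\{1\}$ at degree $D:=2\eta^{-1}(\alpha_A\alpha_B\alpha_C)^{-1/2}$, where $\alpha_X:=|X|/|A_n|\ge n^{-cn^{1/3}}$.

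The high-degree range $\chi(1)>D$ is the Gowers--Nikolov--Pyber estimate localized: bounding one factor by $|\phi_C(\chi)|\le(\chi(1)\sqrt{\alpha_C})^{-1}$ (a pointwise consequence of Parseval), extracting the factor $\chi(1)^{-1}<D^{-1}$, and applying Cauchy--Schwarz together with Parseval to the residual sum $\sum_\chi\chi(1)^2|\phi_A(\chi)||\phi_B(\chi)|$ bounds this part by $D^{-1}(\alpha_A\alpha_B\alpha_C)^{-1/2}=\eta/2$. Normality is not used here.

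For the low-degree range $1\ne\chi(1)\le D$ I would use character bounds. Since $\alpha_X\ge n^{-cn^{1/3}}$ we have $D\le n^{O(cn^{1/3})}$, so every such $\chi$ has level $\ell(\chi)=n-\lambda_1=O(cn^{1/3})$, and there are only $e^{O(n^{1/6})}$ of them. For each $X$ I would decompose $X=X'\sqcup X''$, where $X'$ is the set of ``spread'' elements of $X$ --- permutations whose support is all but $O(cn^{1/3})$ points and which have $O(cn^{1/3})$ cycles of each bounded length --- and $X''$ is the complementary normal set. A direct count of non-spread permutations (at most $n!/\lfloor cn^{1/3}\rfloor!$ up to constants and a polynomial factor) gives $|X''|/|X|\le\alpha_X^{10}$ once the constants are chosen correctly; this is exactly where the exponent $\tfrac13$ is forced, since one needs $(cn^{1/3})!\gtrsim n^{cn^{1/3}}$. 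The sharp level-wise character bounds for symmetric and alternating groups (Larsen--Shalev, Larsen--Tiep) give, for $g\in X'$, $|\chi(g)/\chi(1)|\le (Cc\,n^{-2/3})^{\ell(\chi)}$ up to lower-order factors, so $|\phi_X(\chi)|\le (Cc\,n^{-2/3})^{\ell(\chi)}+\alpha_X^{10}$. Feeding this in: the $\alpha_X^{10}$ contribution is at most $e^{O(n^{1/6})}D^2\alpha^{30}=o(1)$, while the main contribution is at most $\sum_{\ell\ge1}p(\ell)\,(n^{\ell})^{2}\,(Cc\,n^{-2/3})^{3\ell}=\sum_{\ell\ge1}p(\ell)\,(C^{3}c^{3})^{\ell}$, using that there are at most $p(\ell)$ irreducible characters of level $\ell$ and each has degree $\lesssim n^{\ell}$ (here $p$ is the partition function, so $p(\ell)\le e^{O(\sqrt\ell)}$). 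For $c=c(\eta)$ small enough this geometric-type series is at most $\eta/2$. Adding the two ranges yields the displayed inequality, hence the theorem, with $c$ depending only on $\eta$ and $n$ sufficiently large.

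The delicate point is this low-degree step: one needs the character bound in the sharp ``ratio $\le(\text{support deficit}/n)^{\,\text{level}}$'' form uniformly over all low-level irreducible characters of $A_n$ --- the self-conjugate partitions, where $\chi_\lambda|_{A_n}$ splits, requiring extra care --- and the definition of ``spread'' must be calibrated so that the discarded normal set is polynomially small in the density while the character bound on the spread part is still strong enough, the two constraints meeting precisely at support deficit of order $n^{1/3}$. In the actual proof of Lifshitz and Marmor this bookkeeping is organized cleanly through global hypercontractivity for $S_n$, which controls the low-level Fourier mass of $1_X$ for an arbitrary large normal set $X$ in one stroke; I would expect either to invoke that machinery or to establish by hand the relevant level-$d$ inequality for normal sets.
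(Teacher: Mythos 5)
First, note that the paper does not prove Theorem \ref{LM1} at all: it is quoted verbatim from \cite[Theorem 2.5]{LM}, so there is no internal proof to compare yours against. What you have written is therefore an attempt to reprove a cited theorem of Lifshitz and Marmor from scratch, and it should be judged on those terms.

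Your architecture (reduce to the character-sum inequality via (\ref{e2}), split $\mathrm{Irr}(A_n)$ at a degree threshold $D$, handle the high-degree range by Cauchy--Schwarz and Parseval, and handle the low-degree range by level-wise character bounds after discarding a small normal set of non-spread elements) is a reasonable skeleton and does resemble the shape of the Lifshitz--Marmor argument. The high-degree estimate and the counting of characters of level $O(cn^{1/3})$ are fine. The genuine gap is the low-degree step: the inequality $|\chi(g)/\chi(1)|\le (Cc\,n^{-2/3})^{\ell(\chi)}$ for all ``spread'' $g$ and all characters of level $\ell(\chi)=O(cn^{1/3})$, uniformly and with the stated dependence on the support deficit, is not an off-the-shelf consequence of the Larsen--Shalev or Larsen--Tiep bounds (those give exponential-in-$\chi(1)$ savings of the form $\chi(1)^{E(g)+o(1)}$, and the $o(1)$ in the exponent is already fatal at densities as small as $n^{-cn^{1/3}}$). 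Establishing precisely this level-$d$ control of $\langle 1_X,\chi\rangle$ for an arbitrary large normal set $X$ is the main content of \cite{LM}, where it is achieved through global hypercontractivity for $S_n$; you acknowledge this yourself in your final paragraph, which amounts to conceding that the key lemma is assumed rather than proved. A secondary, repairable issue is the bookkeeping in the spread/non-spread decomposition: the bound $|X''|/|X|\le\alpha_X^{10}$ requires the constant in ``support all but $O(cn^{1/3})$ points'' to be tuned against the constant $c$ in the density $n^{-cn^{1/3}}$, and non-spread elements with many short cycles (not only fixed points) must be counted; this is doable but is asserted, not done. As it stands, the proposal is a correct reduction of Theorem \ref{LM1} to the central theorem of \cite{LM}, not an independent proof.
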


It is shown in \cite[Theorem 8.1]{LM} that Theorem \ref{LM1} is best possible in the sense that there exists an absolute constant $C$ (depending on $\eta$) such that $A_n$ is not normally an $(n^{-Cn^{1/3}}, \eta)$-mixer. 

It would be interesting to extend Theorem \ref{LM1} in the spirit of Theorem \ref{main}, however with our current method this is not possible. 

In the rest of the paper we will work with the following definition. 

\begin{definition}\label{def:mixer}
	Let $\epsilon$ and $\eta$ be positive real numbers 
	less than $1$. Let $i \in \{ 1, 2, 3 \}$. The finite group 
	$G$ is an $(\epsilon, \eta, i)$-mixer if whenever $A$, 
	$B$, $C$ are subsets of $G$ each of size at least $
	\epsilon |G|$ and $i$ of these subsets are normal in 
	$G$, then $$(1-\eta) \frac{|C|}{|G|} < \mathrm{Prob}
	(A,B,C) < (1 + \eta) \frac{|C|}{|G|}.$$
\end{definition}

For a positive real number $\epsilon$ less than $1$ and for a finite group $G$, let $k_{\epsilon}(G)\geq 1$ denote the number of conjugacy classes $K$ of $G$ such that $|K| < \epsilon |G|$.

\begin{proposition}\label{prop:epsilon'}
Let $ \eta $ and $\epsilon $ be positive real numbers satisfying the inequalities  $\eta<1/2$ and 
$\epsilon < \min \{ 1, \eta \cdot {k_{\epsilon}(G)}^{-1} {(1-\eta)}^{-2} \}$. Let $G$ be a finite group which is an $(\epsilon, \eta, 3)$-mixer. Let $\epsilon' = {(\epsilon \cdot k_{\epsilon}(G) / \eta)}^{1/2} < 1$. If $A$, $B$, $C$ are subsets of $G$ each of size at least $\epsilon' |G|$ with $A$ and $B$ normal in $G$, then 
$$(1 - 2\eta) \frac{|C|}{|G|} < \mathrm{Prob}(A, B, C) < (1 + 2\eta) \frac{|C|}{|G|}.$$ 
\end{proposition}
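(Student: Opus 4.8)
The plan is to deduce the Proposition from the $(\epsilon,\eta,3)$-mixer hypothesis by splitting each of the normal sets $A$ and $B$ into a ``large-class part'' and a ``small-class part'', exactly in the spirit of Section 7. Write $A = A_1 \sqcup A_2$ where $A_1$ is the union of those conjugacy classes contained in $A$ of size at least $\epsilon|G|$, and $A_2$ is the union of the remaining classes; similarly $B = B_1 \sqcup B_2$. By definition of $k_\epsilon(G)$ we have $|A_2| \le k_\epsilon(G)\,\epsilon|G|$ and $|B_2| \le k_\epsilon(G)\,\epsilon|G|$, and by the choice $\epsilon' = (\epsilon\, k_\epsilon(G)/\eta)^{1/2}$ together with the displayed bound on $\epsilon$ (which guarantees $\epsilon' < 1$ and forces $|A_2|,|B_2|$ to be a small fraction of $|A|,|B| \ge \epsilon'|G|$), we get $|A_1| > (1-\eta)|A|$ and $|B_1| > (1-\eta)|B|$, and in particular $A_1$, $B_1$ are non-empty. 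Note $A_1$, $B_1$ are themselves normal.

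The next step is to apply the mixer hypothesis to the triple $(A_1, B_1, C)$. Here I must be slightly careful: only $A_1$ and $B_1$ are normal, not $C$, whereas $G$ is only assumed to be an $(\epsilon,\eta,3)$-mixer. However, since $A_1$ and $B_1$ are each unions of conjugacy classes of size at least $\epsilon|G|$, I can first run the argument with $C$ replaced by an arbitrary conjugacy class $K$ of $G$ with $|K| \ge \epsilon|G|$: for each such $K$ and each large class $K_{a}\subseteq A_1$, $K_b\subseteq B_1$, the mixer property applied to $(K_a,K_b,K)$ gives $(1-\eta)|K_a||K_b||K|/|G| < N(K_a,K_b,K) < (1+\eta)|K_a||K_b||K|/|G|$; summing over the large classes in $A_1$ and $B_1$ yields $(1-\eta)|A_1||B_1||K|/|G| < N(A_1,B_1,K) < (1+\eta)|A_1||B_1||K|/|G|$. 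This is the analogue of (\ref{threenormal}). Summing this over all conjugacy classes $K$ contained in $C$ with $|K|\ge \epsilon|G|$ (whose union call $C_1$) gives the same two-sided estimate with $|K|$ replaced by $|C_1|$, and the small-class part $C\setminus C_1$ contributes at most $k_\epsilon(G)\,\epsilon|G|\cdot|G|$ to $N(A,B,C)$, which is again negligible relative to $|A||B||C|/|G| \ge (\epsilon')^3|G|^2$ by the choice of $\epsilon'$ and the bound on $\epsilon$.

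From there I would assemble the estimate exactly as in Section 7: $N(A,B,C) = \sum_{i,j\in\{1,2\}} N(A_i,B_j,C) + (\text{correction from } C_2)$, bound the three ``mixed'' terms $N(A_i,B_j,C)$ with $(i,j)\ne(1,1)$ by $|G|\min\{|A_i|,|B_j|,|C|\}$, which is small, and combine with $(1-\eta)|A_1||B_1||C_1|/|G| < N(A_1,B_1,C_1) < (1+\eta)|A_1||B_1||C_1|/|G|$ and the inequalities $|A_1|>(1-\eta)|A|$, $|B_1|>(1-\eta)|B|$, $|C_1|>(1-\eta)|C|$. Since $(1-\eta)(1-\eta)^3 > 1 - 4\eta$ is a bit weaker than what is claimed, I will instead track constants carefully: with $\eta < 1/2$, the product $(1-\eta)^4 \ge 1-2\eta$ fails, so the sharper route is to apply the mixer with parameter $\eta$ chosen so that the accumulated losses $(1\pm\eta)(1\pm\eta)^3$ plus the $O(\epsilon k_\epsilon(G)|G|^2)$ error stay within $(1\pm 2\eta)$; the hypotheses on $\epsilon$ and $\epsilon'$ are exactly calibrated for this. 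The main obstacle I anticipate is the bookkeeping that the error terms and the $(1\pm\eta)$-to-$(1\pm2\eta)$ slack are all simultaneously controlled by the stated constraint $\epsilon < \eta\, k_\epsilon(G)^{-1}(1-\eta)^{-2}$ and the definition $\epsilon' = (\epsilon k_\epsilon(G)/\eta)^{1/2}$; everything else is a routine repetition of the Section 7 splitting argument, now with ``$|G|^{1-\alpha}/\eta^2$'' replaced by ``$\epsilon|G|$'' and ``Lemma \ref{k(G)}'' replaced by ``the definition of $k_\epsilon(G)$.''
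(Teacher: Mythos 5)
There is a genuine quantitative gap here, and you half-acknowledge it yourself. Because you decompose $A=A_1\sqcup A_2$ and $B=B_1\sqcup B_2$ as well as $C$, the lower bound your route produces has the shape $(1-\eta)\frac{|A_1||B_1||C_1|}{|G|}$, and the hypotheses only give $|A_2|/|A|\le k_\epsilon(G)\epsilon/\epsilon'=(\eta\epsilon k_\epsilon(G))^{1/2}<\eta/(1-\eta)$ (not $\le\eta$, as you claim: that would need $\epsilon\le\eta/k_\epsilon(G)$, which is weaker than what is assumed). So the best constant you can reach is
$(1-\eta)\bigl(1-\tfrac{\eta}{1-\eta}\bigr)^{3}=\tfrac{(1-2\eta)^{3}}{(1-\eta)^{2}}$,
which is strictly below $1-2\eta$ for every $\eta\in(0,1/2)$ (it would require $(1-2\eta)^{2}\ge(1-\eta)^{2}$) and is far below it when $\eta$ is close to $1/2$. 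Your proposed repair --- ``apply the mixer with parameter $\eta$ chosen so that the accumulated losses stay within $(1\pm2\eta)$'' --- is not available: the mixer parameter is fixed by hypothesis, and the constraints on $\epsilon$ and $\epsilon'$ are calibrated for exactly one multiplicative loss $(1\pm\eta)$ plus one additive loss $\eta|C|/|G|$, with no slack for the extra factors coming from $A_2$ and $B_2$. Two further problems: (a) you define $C_1$ as the union of large classes \emph{contained in} $C$, but $C$ is not normal and may contain no full conjugacy class, so $|C\setminus C_1|\le k_\epsilon(G)\epsilon|G|$ can fail; one must instead remove from $C$ its intersection with the small classes and use, for $c\in K_i$, the identity $N(A,B,\{c\})=N(A,B,K_i)/|K_i|$ (valid precisely because $A,B$ are normal) to weight each class by $|C\cap K_i|/|K_i|$; (b) your bound $N(A,B,C\setminus C_1)\le k_\epsilon(G)\epsilon|G|\cdot|G|$ compared against $(\epsilon')^{3}|G|^{2}$ needs $\epsilon k_\epsilon(G)\ge\eta$, which is not a hypothesis; the correct comparison is $N(A,B,C_2)\le|A||C_2|$ against $|A||B||C|/|G|$.

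The idea you are missing is that $A$ and $B$ should not be decomposed at all. Since $A$ and $B$ are normal, $\mathrm{Prob}(A,B,C)=\frac{1}{|A||B|}\sum_i |C\cap K_i|\,N(A,B,K_i)/|K_i|$ exactly; for each class with $|K_i|\ge\epsilon|G|$ one applies the $(\epsilon,\eta,3)$-mixer hypothesis directly to the triple $(A,B,K_i)$ --- all three sets are normal and of size at least $\epsilon|G|$ because $\epsilon'\ge\epsilon$ --- incurring a single factor $(1\pm\eta)$. The small classes contribute at most $|C\cap K|/|B|\le k_\epsilon(G)\epsilon/\epsilon'=\eta\epsilon'\le\eta|C|/|G|$ to the upper bound, and their omission costs at most $(1-\eta)k_\epsilon(G)\epsilon\le(\eta\epsilon k_\epsilon(G))^{1/2}\le\eta|C|/|G|$ on the lower-bound side, using exactly the hypothesis $\epsilon\le\eta(1-\eta)^{-2}k_\epsilon(G)^{-1}$. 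This gives $(1\pm2\eta)$ with nothing to spare, which is why the additional losses in your decomposition cannot be absorbed.
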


\begin{proof} 
Let $A$, $B$, $C$ be subsets of $G$ each of size at least $\epsilon' |G|$ with $A$ and $B$ normal in $G$. Since $N(A,B,C) = \sum_{c \in C} N(A,B,\{ c \})$, we have 
\begin{equation}
\label{m1}
\mathrm{Prob}(A,B,C) = \frac{1}{|A||B|} \sum_{c \in C} N(A,B,\{ c \}).
\end{equation}
Let $m$ be the number of conjugacy classes of $G$. Let the list of conjugacy classes of $G$ be $K_{1}, \ldots , K_{m}$ arranged in such a way that the conjugacy classes $K_{1}, \ldots , K_{t}$ have sizes at least $\epsilon |G|$ and the conjugacy classes $K_{t+1}, \ldots , K_{m}$ have sizes less than $\epsilon |G|$. Let $K$ be the union of the conjugacy classes $K_{t+1}, \ldots , K_{m}$. For each $i \in \{ 1, \ldots , m \}$, let $c_{i}$ be an element from $K_{i}$.  

Since $A$ and $B$ are normal in $G$, the number $N(A,B,\{ c_i \})$ is independent from the choice of $c_i$ in $K_i$. This gives
\begin{equation}
\label{m2}	
\sum_{c \in C} N(A,B,\{ c \}) = \sum_{i=1}^{m} |C \cap K_{i}| \cdot N(A,B,\{c_{i}\}) =
\sum_{i=1}^{m} |C \cap K_{i}| \cdot \frac{N(A,B,K_{i})}{|K_{i}|}.
\end{equation} 
From (\ref{m1}) and (\ref{m2}) we get
\begin{align}	
\mathrm{Prob}(A,B,C) & = \frac{1}{|A||B|} \Big( \sum_{i=1}^{m} |C \cap K_{i}| \cdot \frac{N(A,B,K_{i})}{|K_{i}|} \Big) = \nonumber \\ 
&= \frac{1}{|A||B|} \Big( \sum_{i=1}^{t} |C \cap K_{i}| \cdot \frac{N(A,B,K_{i})}{|K_{i}|}  + \sum_{i=t+1}^{m} |C \cap K_{i}| \cdot \frac{N(A,B,K_{i})}{|K_{i}|} \Big). \label{m3}
\end{align} 
Since $N(A,B,K_{i}) \leq |A||K_{i}|$ for every $i$ in $\{ 1, \ldots , m \}$ and $|B|, |C| \geq \epsilon' |G|$, we have 
\begin{align}
\frac{1}{|A||B|} \sum_{i=t+1}^{m} |C \cap K_{i}| \cdot \frac{N(A,B,K_{i})}{|K_{i}|} &  \leq \frac{1}{|B|} \sum_{i=t+1}^{m} |C \cap K_{i}| \nonumber\\
& \leq \frac{|C \cap K|}{|B|} \leq \frac{|C \cap K|}{\epsilon' |G|}  \leq \frac{|K|}{\epsilon' |G|} \nonumber \\ 
& \leq \frac{k_{\epsilon}(G) \epsilon|G|}{\epsilon' |G|} \nonumber = k_{\epsilon}(G) (\epsilon / \epsilon')  = \eta \epsilon' \nonumber \\
&\leq \eta \frac{|C|}{|G|}. \label{m4}	
\end{align}
Formulas (\ref{m3}) and (\ref{m4}) give 
\begin{equation}
\begin{split}	
\label{m5}	
0 \leq \mathrm{Prob}(A,B,C) - \Big( \sum_{i=1}^{t} \frac{|C \cap K_{i}|}{|K_{i}|} \cdot \mathrm{Prob}(A,B,K_{i}) \Big) \leq \eta \frac{|C|}{|G|}.
\end{split}
\end{equation}
Observe that $\epsilon' \geq \epsilon$ (since $k_\epsilon(G)\geq 1>\eta/(1-\eta)$). Since $G$ is an 
$(\epsilon, \eta, 3)$-mixer, we have 
\begin{equation}
\label{m6}	
(1 - \eta) \frac{|K_{i}|}{|G|} < \mathrm{Prob}(A,B,K_{i}) < (1 + \eta) \frac{|K_{i}|}{|G|}
\end{equation}
for every $i \in \{ 1, \ldots , t \}$. Inequalities (\ref{m5}) and (\ref{m6}) give the required upper bound
\begin{align*}
\mathrm{Prob}(A,B,C) & < (1+\eta) \Big( \sum_{i=1}^{t} \frac{|C \cap K_{i}|}{|G|} \Big) + \eta \frac{|C|}{|G|}	\\
& = (1 + \eta) \frac{|C \cap (G \setminus K)|}{|G|} + \eta \frac{|C|}{|G|} \\ & \leq (1 + 2\eta) \frac{|C|}{|G|}.  
\end{align*} 
Inequalities (\ref{m5}) and (\ref{m6}) also give
\begin{align}
\mathrm{Prob}(A,B,C) & \geq \sum_{i=1}^{t} \frac{|C \cap K_{i}|}{|K_{i}|} \cdot \mathrm{Prob}(A,B,K_{i})\nonumber \\
& > (1-\eta) \sum_{i = 1}^{t} \frac{|C \cap K_{i}|}{|G|} \nonumber  \\
& = (1-\eta) \frac{|C \cap (G \setminus K)|}{|G|}\nonumber \\ 
& \geq (1 - \eta) \Big( \frac{|C| - |K|}{|G|} \Big). \label{m7}
\end{align}
Since $|K| \leq k_{\epsilon}(G) \epsilon |G|$, inequality (\ref{m7}) gives 
\begin{equation}
	\label{m8}	
	\begin{split}
		\mathrm{Prob}(A,B,C) > (1 - \eta) \frac{|C|}{|G|} - (1-\eta) \frac{|K|}{|G|} \geq (1 - \eta) \frac{|C|}{|G|} - (1-\eta) k_{\epsilon}(G) \epsilon.
	\end{split}
\end{equation}
Since $|C| \geq \epsilon' |G|$, we have $\eta |C|/|G| \geq \eta \epsilon'$. Since $\epsilon' = {(\epsilon k_{\epsilon}(G)/\eta)}^{1/2}$, we get $\eta |C|/|G| \geq {(\eta \epsilon k_{\epsilon}(G))}^{1/2}$. In view of this and (\ref{m8}), in order to complete the proof of the lemma, it is sufficient to show that ${(\eta \epsilon k_{\epsilon}(G))}^{1/2} \geq (1-\eta) k_{\epsilon}(G) \epsilon$. This inequality is equivalent to the inequality $\epsilon \leq \eta (1-\eta)^{-2} k_{\epsilon}(G)^{-1}$. But this is part of the conditions of our lemma.  
\end{proof}	

We deduce the following consequence of Proposition \ref{prop:epsilon'}. This is not needed for the proof of Theorem \ref{main}.

\begin{theorem}
Let $ \eta $ and $\epsilon $ be positive real numbers satisfying the inequalities  \\ $\eta<1/2$ and 
$\epsilon < \min \{ 1, \eta \cdot {k_{\epsilon}(G)}^{-1} {(1-\eta)}^{-2} \}$. Let $G$ be a finite group which is an $(\epsilon, \eta, 3)$-mixer. Let $\epsilon' = {(\epsilon \cdot k_{\epsilon}(G) / \eta)}^{1/2} < 1$.  If a finite group $G$ is an $(\epsilon, \eta, 3)$-mixer, then it is also an $(\epsilon', 2 \eta, 2)$-mixer.
\end{theorem}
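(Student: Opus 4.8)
The plan is to observe that the statement is essentially a repackaging of Proposition \ref{prop:epsilon'}: that proposition already disposes of the situation in which the two normal sets are the \emph{first two} arguments, so the only additional work is to reduce the remaining two cases of Definition \ref{def:mixer} to that situation using the symmetry identities of Section 3.

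First I would fix subsets $A$, $B$, $C$ of $G$, each of size at least $\epsilon' |G|$, exactly two of which are normal, and note that the hypotheses on $\eta$, $\epsilon$ and $\epsilon' = {(\epsilon \cdot k_{\epsilon}(G)/\eta)}^{1/2} < 1$, together with the assumption that $G$ is an $(\epsilon, \eta, 3)$-mixer, are exactly the hypotheses required to invoke Proposition \ref{prop:epsilon'}. If $A$ and $B$ are the normal sets, Proposition \ref{prop:epsilon'} applies verbatim and yields $(1-2\eta)\frac{|C|}{|G|} < \mathrm{Prob}(A,B,C) < (1+2\eta)\frac{|C|}{|G|}$, which is what we want. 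Next I would treat the case in which $A$ and $C$ are normal; then $A$ and $C^{-1}$ are normal, and $|C^{-1}| = |C|$, $|B^{-1}| = |B|$, so each entry of the triple $(C^{-1}, A, B^{-1})$ still has size at least $\epsilon'|G|$ and its first two entries are normal. Proposition \ref{prop:epsilon'} applied to $(C^{-1}, A, B^{-1})$ gives $(1-2\eta)\frac{|B|}{|G|} < \mathrm{Prob}(C^{-1}, A, B^{-1}) < (1+2\eta)\frac{|B|}{|G|}$, and multiplying through by $|C|/|B|$ and using the identity $\mathrm{Prob}(A,B,C) = \frac{|C|}{|B|}\,\mathrm{Prob}(C^{-1}, A, B^{-1})$ from (\ref{eq:last}) produces the required estimate for $\mathrm{Prob}(A,B,C)$. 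The case in which $B$ and $C$ are normal is handled symmetrically: apply Proposition \ref{prop:epsilon'} to $(B, C^{-1}, A^{-1})$ (whose first two entries are normal and whose entries all have size at least $\epsilon'|G|$), obtaining bounds on $\mathrm{Prob}(B, C^{-1}, A^{-1})$ in terms of $|A^{-1}|/|G| = |A|/|G|$, and then use $\mathrm{Prob}(A,B,C) = \frac{|C|}{|A|}\,\mathrm{Prob}(B, C^{-1}, A^{-1})$ from (\ref{eq:last}).

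I do not expect a genuine obstacle here; the only points requiring care are the bookkeeping facts that taking inverses of a set preserves both normality and cardinality, and that the cardinality factor appearing in each instance of (\ref{eq:last}) is precisely the one that converts the "free" argument on the right-hand side into $|C|$ on the left. Combining the three cases yields the $(\epsilon', 2\eta, 2)$-mixer property and completes the proof.
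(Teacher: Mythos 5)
Your proposal is correct and follows essentially the same route as the paper's own proof: the case where $A$ and $B$ are normal is Proposition \ref{prop:epsilon'} verbatim, and the other two cases are reduced to it by applying the proposition to $(C^{-1},A,B^{-1})$ and $(B,C^{-1},A^{-1})$ respectively and then invoking the identities of Corollary \ref{3.2}. No gaps.
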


\begin{proof}
Let $G$ be an $(\epsilon, \eta, 3)$-mixer. Let $A$, $B$, $C$ be subsets of $G$ each of size at least $\epsilon'|G|$. Assume that two of the sets $A$, $B$, $C$ are normal in $G$. If $A$ and $B$ are normal in $G$, then the result follows by  Proposition \ref{prop:epsilon'}. Let $A$ and $C$ be normal in $G$. Then 
$$(1-2\eta) \frac{|B^{-1}|}{|G|} < \mathrm{Prob}(C^{-1},A,B^{-1}) < (1+2\eta) \frac{|B^{-1}|}{|G|}$$ by Proposition \ref{prop:epsilon'}. Thus $$(1-2\eta) \frac{|C|}{|G|} < \frac{|C|}{|B|} \cdot \mathrm{Prob}(C^{-1},A,B^{-1}) < (1+2\eta) \frac{|C|}{|G|}.$$ Since $$\frac{|C|}{|B|} \cdot \mathrm{Prob}(C^{-1},A,B^{-1}) = \mathrm{Prob}(A,B,C)$$ by Corollary \ref{3.2}, the result follows. Finally, let $B$ and $C$ be normal in $G$. Then $$(1-2\eta) \frac{|A^{-1}|}{|G|} < \mathrm{Prob}(B,C^{-1},A^{-1}) < (1+2\eta) \frac{|A^{-1}|}{|G|}$$ by Proposition \ref{prop:epsilon'}. Thus $$(1-2\eta) \frac{|C|}{|G|} < \frac{|C|}{|A|} \cdot \mathrm{Prob}(B,C^{-1},A^{-1}) < (1+2\eta) \frac{|C|}{|G|}.$$ Since $$\frac{|C|}{|A|} \cdot \mathrm{Prob}(B,C^{-1},A^{-1}) = \mathrm{Prob}(A,B,C)$$ by Corollary \ref{3.2}, the result follows in this case too. The proof is complete.  
\end{proof}

\section{Proof of Theorem \ref{main}}

In Section 2 we showed that, in order to prove Theorem \ref{main}, we may assume that $G$ is a finite simple classical group $\mathrm{Cl}(n,q)$ with $n$ large enough. Given $\eta$ with  $0<\eta<1/4$ and $\delta > 0$, we may also replace Hypothesis (2) by (2'). In  Section 4 we also showed that it is sufficient to establish the first conclusion of Theorem \ref{main}. We may assume that $A$ and $B$ are normal in $G$ by Proposition \ref{prop:trick}. If $C$ is normal in $G$, Theorem \ref{main} follows from Section 6. In the language of Definition \ref{def:mixer}, $G$ is an $(\epsilon, \eta, 3)$-mixer where $\epsilon = |G|^{-\delta}/\eta^{2}$. By changing $\eta$ to $\eta/2$, we also have that $G$ is an $(\epsilon, \eta/2, 3)$-mixer where $\epsilon = 4 |G|^{-\delta}/\eta^{2}$. Finally, assume that $C$ is not normal in $G$. Observe that $k_\epsilon(G)\geq 1$ since $\epsilon|G|=4|G|^{1-\delta}/\eta^2> 1$ for $n$ large enough.  
We have $k_{\epsilon}(G) \leq k(G) \leq |G|^{c/n}$ by Lemma \ref{k(G)}. It follows that 
$$\epsilon = 4 |G|^{-\delta}/\eta^{2} < \min \{ 1, \eta {(1-\eta)}^{-2} |G|^{-c/n}\},$$ for any given $\delta > 0$, provided that $n$ is sufficiently large. Now $G$ is an $(\epsilon', \eta, 2)$-mixer by Proposition \ref{prop:epsilon'}, where $$\epsilon' = {(\epsilon \cdot k_{\epsilon}(G)/ \eta)}^{1/2} \leq (2/\eta^{3/2}) |G|^{((c/n)-\delta)/2}.$$ This is at most $|G|^{-\delta/3}/\eta^{2}$ provided that $n$ is sufficiently large. In this case the first conclusion of Theorem \ref{main} holds with $\delta/3$ in place of $\delta$.

\end{document}